\newtheorem{thm}{Theorem}[section]
\newtheorem{prop}[thm]{Proposition}
\newtheorem{conj}[thm]{Conjecture}
\theoremstyle{definition}
\newtheorem{defn}[thm]{Definition}
\theoremstyle{remark}
\newtheorem{rem}[thm]{Remark}
\begin{document}

\title[Map-germs of  corank 2 from $n$-space to $(n+1)$-space]{EXAMPLES of FINITELY DETERMINED MAP-GERMS OF \\ CORANK 2  FROM $n$-SPACE TO $(n+1)$-SPACE}

\author{Ay{\c s}e Alt{\i}nta{\c s} Sharland}

\address{Department of Mathematics, Y{\i}ld{\i}z Technical University, Davutpa{\c s}a Campus,  Esenler 34210, {\.I}stanbul, Turkey}
\curraddr{Institute of Mathematical Sciences, Stony Brook University, Stony Brook NY 11794-3660, USA}

\email{aysealtintas@gmail.com}

\keywords{finite determinacy, multiple point spaces, Mond conjecture}

\subjclass[2000]{58K40, 32S30}

\begin{abstract} We produce new examples supporting the Mond conjecture which can be stated as follows. The number of parameters needed for a miniversal unfolding of a finitely determined 	map-germ from $n$-space to $(n+1)$-space is less than (or equal to if the map-germ is weighted homogeneous)  the rank of the $n$th homology group of the image of a stable perturbation of the map-germ.  We give examples of finitely determined map-germs of corank 2 from $3$-space to $4$-space satisfying the conjecture.  We introduce a new type of augmentations to generate
series of finitely determined map-germs in dimensions $(n,n+1)$ from a given one in dimensions $(n-1,n)$. We present more examples in dimensions $(4,5)$ and $(5,6)$ based on our examples, and verify the conjecture for them.
\end{abstract}
\maketitle

\section{Introduction}

One of the intriguing problems in Singularity Theory is to relate algebraic properties of holomorphic map-germs with topological properties of the images of their stable perturbations.
The image of a stabilisation of a finitely $\mathcal{A}$-determined map-germ from $(\mathbb{C}^n,0)$ to $(\mathbb{C}^{n+1},0)$ has the homotopy type of a wedge of $n$-spheres if $(n,n+1)$ is in the range of Mather's nice dimensions, i.e. $n<15$, (\cite{mond89}) -- the existence of stabilisations is not guaranteed outside the nice dimensions; however, a similar statement can be proved for \textit{topological stabilisations} which exist even for $n\geq 15$ (\cite[Section 4]{damon-mond}, \cite{goryunov-mond}). The number of spheres in the wedge is an $\mathcal{A}$-invariant of the map-germ which is called the \textit{image Milnor number} and denoted by $\mu_I$.
Pellikaan and de Jong (unpublished) then  de Jong and van Straten (\cite{jong-straten}) and later Mond (\cite{mond89}) proved the following result resembling the relation between Milnor and Tjurina numbers for isolated complete intersection singularities (\cite{greuel80}, \cite{looijenga-steenbrink}, see also \cite{milnor} for the hypersurface case). For any finitely $\mathcal{A}$-determined map-germ $f$ from a surface to $3$-space, \[\mathcal{A}_e\textnormal{-codim}(f)\leq \mu_I(f)\] and with equality if $f$ is weighted homogeneous where $\mathcal{A}_e\textnormal{-codim}(f)$ is the dimension of the base of a mini-$\mathcal{A}_e$-versal unfolding of $f$. A similar result for map-germs from $(\mathbb{C},0)$ to $(\mathbb{C}^2,0)$ was also proved by Mond (\cite{mond-bent}). Motivated by these results, Mond suggested the following generalisation.
\begin{conj}[\cite{mond89}, \textbf{Mond Conjecture}]\label{conjmond} Let $f\colon (\mathbb{C}^n,0)\rightarrow (\mathbb{C}^{n+1},0)$ be a finitely $\mathcal{A}$-determined map-germ. 
Then \begin{equation}\label{mui}
\mathcal{A}_e\textnormal{-codim}(f)\leq \mu_{I}(f)
\end{equation}  and with equality if $f$ is weighted homogeneous and $n<15$.
\end{conj}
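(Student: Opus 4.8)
The plan is to recast both sides of \eqref{mui} as dimensions of explicit $\mathbb{C}$-vector spaces and then to compare them. On the algebraic side, $\mathcal{A}_e\textnormal{-codim}(f)$ is the dimension of the normal space to the $\mathcal{A}_e$-orbit of $f$, namely $N\mathcal{A}_e f = \theta(f)/(tf(\theta_n)+\omega f(\theta_{n+1}))$, so the left-hand side is intrinsically the length of a module built from the $1$-jet of $f$. On the topological side, $\mu_I(f)$ is the rank of the $n$th reduced homology of the disentanglement $X_s=f_s(\mathbb{C}^n)\cap B$, the image of a stable perturbation $f_s$ restricted to a small ball. First I would express $\mu_I$ algebraically as well, via the image-computing spectral sequence and the alternating (co)homology of the multiple-point spaces $D^k(f)$, which computes $H_*(X_s)$ from the fixed-point-free $S_k$-action on the iterated fibre products. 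This reduces both quantities to lengths or ranks of modules naturally attached to $f$.

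Second, I would exploit the fact that the image $X=f(\mathbb{C}^n)$ is a hypersurface in $(\mathbb{C}^{n+1},0)$, hence carries a deformation theory of its own. The goal is to identify $\mathcal{A}_e\textnormal{-codim}(f)$ with a Tjurina-type invariant $\dim_{\mathbb{C}} T^1_X$ of this hypersurface and to identify $\mu_I(f)$ with the rank of the vanishing cohomology of the Milnor fibre of $X$. For $n=1$ and $n=2$ (the cases quoted in the excerpt) this is exactly how the result is established: the image of a map-germ from a surface to $3$-space is linked by de Jong--van Straten and Mond to the reduced double-point locus, a space-curve singularity (Cohen--Macaulay of codimension two), and the comparison of $T^1$ with vanishing homology yields the inequality, with equality forced by the $\mathbb{C}^*$-action in the weighted homogeneous case, where the Poincar\'e series of $N\mathcal{A}_e f$ and of the Milnor algebra agree term by term. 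For the weighted homogeneous equality in general I would likewise run a Gauss--Manin / conservation-of-number argument: the grading makes the relevant module a finite-dimensional graded space whose Poincar\'e polynomial one compares with the characteristic polynomial of the (semisimple) monodromy on the vanishing cohomology.

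For the inequality itself I would argue by a semicontinuity or conservation principle along a path to a stable $f_s$: each $\mathcal{A}_e$-codimension-one stable singularity type appearing in $X_s$ should contribute at least one independent cycle to $H_n(X_s)$, so that summing local contributions gives $\mathcal{A}_e\textnormal{-codim}(f)\le\mu_I(f)$. Concretely this amounts to proving that the natural map from first-order deformations of $f$ to the vanishing cohomology of $X_s$ is injective, i.e.\ that $N\mathcal{A}_e f$ is constrained by no more relations than the homology can absorb.

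The main obstacle is precisely the step that makes the conjecture open in general: for $n\ge 3$ there is no known reduction of the image hypersurface to a lower-dimensional singularity with a Cohen--Macaulay or determinantal structure playing the role of the double-point curve, so neither the computation of $\mu_I$ via the alternating homology of the higher spaces $D^k(f)$ nor the identification of $\mathcal{A}_e\textnormal{-codim}(f)$ with a tractable $T^1$ is available in closed form, and the injectivity needed for the inequality is not known a priori. For this reason I would not expect a uniform proof by these means; instead, as in the present paper, I would verify \eqref{mui} on explicitly constructed weighted homogeneous families in dimensions $(3,4)$, $(4,5)$ and $(5,6)$, computing $\mathcal{A}_e\textnormal{-codim}$ directly from $N\mathcal{A}_e f$ and $\mu_I$ from the alternating homology of the $D^k(f)$, and checking the predicted equality in each case.
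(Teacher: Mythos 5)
The statement you were asked to prove is labelled a \emph{conjecture} in the paper, and the paper contains no proof of it: it is the Mond conjecture, open for $n\geq 3$, and the paper's contribution is (i) a proposed reduction of it to another open statement (Conjecture \ref{ngcm}, Cohen--Macaulayness of $N\mathcal{K}_{H,e/\mathbb{C}}G$ for the image hypersurface), and (ii) explicit corank-$2$ examples in dimensions $(3,4)$, $(4,5)$, $(5,6)$ on which the inequality is verified. Your write-up correctly recognises that no uniform proof is available, accurately summarises how the known cases $n=1,2$ are handled (de Jong--van Straten, Mond, via the double-point curve), and correctly locates one genuine obstruction for $n\geq 3$; so there is no ``gap'' to report in the usual sense --- you have not claimed a proof of an open problem.

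It is worth contrasting where you and the paper each place the difficulty. You frame the obstruction on the \emph{topological} side: for $n\geq 3$ there is no Cohen--Macaulay or determinantal structure on the higher multiple-point spaces $D^k(f)$ that would let one compute $\mu_I$ via the image-computing spectral sequence, nor a tractable identification of $\mathcal{A}_e\textnormal{-codim}(f)$ with a $T^1$ of a lower-dimensional singularity. The paper instead frames it on the \emph{deformation-theoretic} side, following Damon--Mond: in the equidimensional and $p\leq n$ cases the discriminant of a stable map is a free divisor, which forces $N\mathcal{K}_{H,e/\mathbb{C}}G$ to be Cohen--Macaulay of dimension $1$ and yields the conservation-of-number argument
\[
N\mathcal{K}_{H,e}g=\sum_{(\textbf{y},t)} \left(N\mathcal{K}_{H,e}g_t\right)_{\textbf{y}},
\]
in which the only surviving local contributions are Milnor numbers summing to $\mu_\Delta$. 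For $p=n+1$ the image of a stable germ is \emph{not} a free divisor, so Cohen--Macaulayness must be established by other means --- this is exactly Conjecture \ref{ngcm}, and Proposition \ref{propng} of the paper shows the property propagates along augmentations, which is how the higher-dimensional examples are checked. Your third paragraph gestures at the same conservation principle, but the mechanism is not ``each codimension-one stable type contributes a cycle''; it is that the length of the $\mathcal{K}_{H,e}$-normal space is conserved in a deformation precisely when the relative normal module is Cohen--Macaulay of dimension $1$, and the nonzero local terms are Milnor numbers of the isolated singularities of $H\circ g_t$ at points off the image. If you want your sketch to align with a route that could actually close the general case, that Cohen--Macaulayness statement is the missing key lemma to isolate.
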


We claim that Conjecture \ref{conjmond} would follow from  the following statement.
\begin{conj}\label{ngcm} Let $F$ be a stable $d$-parameter unfolding of a finitely $\mathcal{A}$-determined map-germ $f\colon (\mathbb{C}^n,0)\rightarrow (\mathbb{C}^{n+1},0)$ with $n<15$, and $H$ be the defining equation of the image of $F$. Assume that
$G\colon (\mathbb{C}^{n+1}\times \mathbb{C},0)\rightarrow (\mathbb{C}^{n+1}\times \mathbb{C}^d,0) $ is a map-germ transverse to $F$ and induces a stabilisation of $f$. Then $N\mathcal{K}_{H,e/\mathbb{C}}G$ is a Cohen-Macaulay module of dimension 1.
\end{conj}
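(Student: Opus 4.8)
The plan is to realise $N\mathcal{K}_{H,e/\mathbb{C}}G$ as the cokernel of an explicit map of free modules and then to control its depth by means of the transversality hypothesis. Writing $S=\mathbb{C}^{n+1}\times\mathbb{C}$ for the source of $G$ and $V=H^{-1}(0)\subset\mathbb{C}^{n+1}\times\mathbb{C}^{d}$ for the image of $F$, I would start from the standard presentation coming from Damon's $\mathcal{K}_{H}$-theory,
\[
N\mathcal{K}_{H,e/\mathbb{C}}G \;=\; \frac{\theta(G)}{\,tG\!\left(\theta_{S/\mathbb{C}}\right)+G^{*}\operatorname{Derlog}(H)\,},
\qquad
\theta(G)\xrightarrow{\ \ }N\mathcal{K}_{H,e/\mathbb{C}}G\longrightarrow 0,
\]
where $\theta(G)$ is the module of vector fields along $G$, $\theta_{S/\mathbb{C}}$ is the module of source vector fields relative to the projection $S\to\mathbb{C}$ (i.e.\ killing $\partial/\partial t$), and $\operatorname{Derlog}(H)$ is the module of logarithmic vector fields of $V$. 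The decisive structural input is that $F$ is \emph{stable}: in these dimensions the image of a stable germ is a free divisor, so $\operatorname{Derlog}(H)$ is a free $\mathcal{O}_{n+1+d}$-module of rank $n+1+d$. Pulling a basis back along $G$ therefore gives $N\mathcal{K}_{H,e/\mathbb{C}}G$ as the cokernel of a single, explicitly presented map of free $\mathcal{O}_{S}$-modules.

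Next I would identify the support and its dimension. The module is supported precisely on the relative $\mathcal{K}_{H}$-instability locus of $G$, that is, the set where the pulled-back logarithmic fields together with $tG(\theta_{S/\mathbb{C}})$ fail to generate $\theta(G)$. Here the hypothesis that $G$ is transverse to $F$ enters in an essential way: transversality says exactly that, after deleting this locus, the section is stable, and — because $G$ is a one-parameter relative family — the locus is a curve rather than a point. This fixes $\dim N\mathcal{K}_{H,e/\mathbb{C}}G=1$, matching the single relative parameter in $\mathbb{C}$.

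It remains to upgrade the dimension count to the Cohen--Macaulay property, i.e.\ to show $\operatorname{depth}N\mathcal{K}_{H,e/\mathbb{C}}G=1$. The clean route is to reduce to Damon's Cohen--Macaulayness results for nonlinear sections of free divisors: with $\operatorname{Derlog}(H)$ free and $G$ of finite relative $\mathcal{K}_{H}$-codimension, the presenting complex is depth-sensitive, and one checks that the Fitting/determinantal loci attached to the presentation have the codimension forced by the support computation, so that the complex is a resolution of the expected length and Auslander--Buchsbaum yields $\operatorname{depth}=1$. Equivalently, Cohen--Macaulayness of dimension $1$ over the smooth base $\mathbb{C}$ is flatness of the family in the relative parameter, which I would establish by miracle flatness once the fibre dimension over $0\in\mathbb{C}$ is shown to equal the generic one.

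The hard part will be precisely this depth step, and it is where I expect the genuine difficulty (and the reason the statement is posed as a conjecture) to lie. Two inputs carry the weight: the freeness of $\operatorname{Derlog}(H)$ for the stable image $F$ in the dimension ranges at issue, and the passage from the \emph{geometric} transversality of $G$ to $F$ to the \emph{exact} codimension of the determinantal loci of the presentation, uniformly along the whole parameter line. Concretely I would have to rule out embedded components of the support curve over the special value $0\in\mathbb{C}$ — equivalently, to show that $G^{*}\operatorname{Derlog}(H)$ stays ``generic enough'' as a submodule of $\theta(G)$ throughout the family. It is in controlling this special fibre, using the finite $\mathcal{A}$-determinacy of $f$ to guarantee isolated relative instability, that the bulk of the work resides, and I would expect the general case to require either new freeness results for stable images or a direct Cohen--Macaulay argument bypassing them.
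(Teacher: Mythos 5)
There is no proof of this statement to compare against: the paper poses it as an open conjecture (Conjecture \ref{ngcm}), verifies it computationally in specific examples, and proves only that it is inherited by augmentations with $q(\gamma)\geq 2$ (Proposition \ref{propng}). So your proposal must stand on its own, and it does not. Its ``decisive structural input'' --- that the image of a stable germ $(\mathbb{C}^n,0)\rightarrow(\mathbb{C}^{n+1},0)$ is a free divisor, so that $\textnormal{Der}(\textnormal{-log }H)$ is free of rank $n+1+d$ --- is false in general. Already the cross-cap $V=\{y^2-x^2z=0\}\subset\mathbb{C}^3$, the image of the stable germ $(x,z)\mapsto(x,z^2,xz)$, is not free: its Jacobian ideal is $(x^2,xz,y)$, whose zero scheme is the $z$-axis with an embedded point at the origin, hence not Cohen--Macaulay of codimension one in $V$, violating the Aleksandrov--Terao freeness criterion. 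This is precisely the structural difference between the image setting $p=n+1$ and the discriminant setting $n\geq p$ treated by Damon and Mond, where discriminants of stable germs in the nice dimensions \emph{are} free divisors and their Proposition 5.2 delivers the Cohen--Macaulayness you are after. With freeness gone, your presentation of $N\mathcal{K}_{H,e/\mathbb{C}}G$ is no longer by a map of free modules of the right ranks, and the depth-sensitivity/Auslander--Buchsbaum step has nothing to run on.

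The rest of the proposal is a programme rather than an argument: the dimension count gives at best $\dim\leq 1$ near the special fibre (transversality of $G$ to $F$ plus finite determinacy of $f$ control the fibre over $0\in\mathbb{C}$, as in the proof of Theorem \ref{newex}), and the genuinely hard content --- excluding embedded components over $0$, i.e.\ showing $\operatorname{depth}=1$ --- is exactly what you defer to ``new freeness results for stable images or a direct Cohen--Macaulay argument bypassing them.'' That deferred step \emph{is} the conjecture. To make progress you would need either a weaker substitute for freeness that still yields a finite free resolution of the correct length (Damon's ``almost free divisor'' technology, or the more recent Jacobian-module approaches to the Mond conjecture), or a direct regular-sequence argument on a concrete presentation, which is what the paper carries out case by case in its examples.
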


A stabilisation of a map-germ $f\colon (\mathbb{C}^n,0)\rightarrow (\mathbb{C}^p,0)$ is a deformation $f_{t}$ which has a representative, also denoted by $f_t$, defined on a neighbourhood $U$ of $0$ such that for sufficiently small and nonzero $t$, $f_{t}|_U\colon U\rightarrow \mathbb{C}^{p}$ is stable. The module $N\mathcal{K}_{H,e/\mathbb{C}}G$ is the relative normal space of $G$ with respect to $\mathcal{K}_H$-equivalence, a relation induced by an action of diffeomorphisms preserving the level sets of $H$. See Section 2 for more details.

Our claim is motivated by Damon and Mond's work \cite{damon-mond} in which they proved a version of Conjecture \ref{ngcm} to obtain a phenomenon similar to (\ref{mui}):
\begin{equation}\label{quote1}\parbox{.8\linewidth}{ Let $F$ be a stable $d$-parameter unfolding of a finitely $\mathcal{A}$-determined map-germ $f\colon (\mathbb{C}^n,0)\rightarrow (\mathbb{C}^{p},0)$ where $(n,p)$ are nice dimensions and $n\geq p$, and $H$ be the defining equation of the image of $F$. Assume that
$G\colon (\mathbb{C}^{p}\times \mathbb{C},0)\rightarrow (\mathbb{C}^{p}\times \mathbb{C}^d,0) $ is a map-germ transverse to $F$ and induces a stabilisation of $f$. Then $N\mathcal{K}_{H,e/\mathbb{C}}G$ is a Cohen-Macaulay module of dimension 1.}\end{equation}

\noindent For a finitely $\mathcal{A}$-determined map-germ in these dimensions,  the discriminant of a stabilisation intersected with a Milnor ball $B_\epsilon$ about the origin has the homotopy type of a wedge of $(p-1)$-dimensional spheres (\cite[Theorem 4.6]{damon-mond}). The number $\mu_{\Delta}$ of spheres in the wedge is called the \textit{discriminant Milnor number}. As a consequence of (\ref{quote1}),
\begin{equation}\label{mudelta} \mathcal{A}_e\textnormal{-codim}(f)\leq \mu_{\Delta}(f)\end{equation}
and with equality if $f$ is weighted homogeneous.

Here we run through the main steps of the proof of the inequality (\ref{mudelta}) to clarify the equivalence of the two conjectures above. The proof is mostly based on the relations between $\mathcal{A}$, $\mathcal{K}_V$ and $\mathcal{K}_H$-equivalences where $\mathcal{K}_V$ and $\mathcal{K}_H$ are subgroups of the contact group  $\mathcal{K}$ and measure the $\mathcal{K}$-equivalence of the germs by an equivalence which preserves $V$ and all level sets of $H$, respectively.   

Consider $f\colon (\mathbb{C}^n,0)\rightarrow (\mathbb{C}^{p},0)$ as a pullback of a stable $d$-parameter unfolding $F$ by an immersion $g\colon (\mathbb{C}^{p},0)\rightarrow (\mathbb{C}^{p}\times \mathbb{C}^d,0) $ transverse to $F$. Assume that the discriminant $V$ of $F$ is defined by some $H\in \mathcal{O}_{\mathbb{C}^{p+d},0}$.  For any 1-parameter deformation $G(\textbf{y},t)=g_t(\textbf{y})$ of $g$ that is transverse to $F$ and induces a stabilisation of $f$,  $N\mathcal{K}_{H,e/\mathbb{C}}G$  is a Cohen-Macaulay module of dimension 1 (\cite[Proposition 5.2]{damon-mond}). The proof of this result essentially depends on the fact that $V$ is a free divisor (which is no longer true if $p=n+1$).  Consequently, $\mathcal{K}_H$-equivalence has a \textit{free deformation theory}; that is,
\[\textnormal{dim}_\mathbb{C} N\mathcal{K}_{H,e}g=\sum_{(\textbf{y},t)\in \textnormal{Supp}(N\mathcal{K}_{H,e/\mathbb{C}}G)} \textnormal{dim}_\mathbb{C}  \left(N\mathcal{K}_{H,e}g_t\right)_{\textbf{y}}\] for sufficiently small $t\in \mathbb{C}$ such that $0$, the isolated $\mathcal{K}_H$-instability point of $g$, splits into finitely many $\mathcal{K}_H$-instability points $y_1,\ldots,y_r$ of $g_t$ (\cite[Corollary 5.5]{damon-mond}). 

Let $h_t=H\circ g_t$ so that $h_t$ is a deformation of the image of $f$ and let $J_{h_t}$ be the Jacobian ideal of $h_t$. If $g_t(\textbf{y})\notin V$
\begin{eqnarray*} ev_H\colon (N\mathcal{K}_{H,e}g_t)_{\textbf{y}}&\rightarrow &\mathcal{O}_{\mathbb{C}^p,\textbf{y}}/J_{h_t} \\
\sum_{i=1}^{p+d}\alpha_i \frac{\partial}{\partial Y_i} &\mapsto & \sum_{i=1}^{p+d}\alpha_i \frac{\partial H}{\partial Y_i}\circ g_t \textnormal{ mod }J_{h_t}\end{eqnarray*}
is an isomorphism (\cite[Lemma 5.6]{damon-mond}). Let $\mu(h_t;\mathbf{y}):=\textnormal{dim}_{\mathbb{C}}\mathcal{O}_{\mathbb{C}^p,\textbf{y}}/J_{h_t}$. So, 
\begin{equation}\label{damond-eq}\textnormal{dim}_\mathbb{C} N\mathcal{K}_{H,e}g=\sum_{g_t(\textbf{y})\in V} \textnormal{dim}_\mathbb{C}  \left(N\mathcal{K}_{H,e}g_t\right)_{\textbf{y}}+ \sum_{g_t(\textbf{y})\notin V}\mu(h_t;\mathbf{y}).\end{equation}
Let $B$ be a neighbourhood of $0\in \mathbb{C}^p\times\mathbb{C}^d$ and $t$ sufficiently small so that \\$\textnormal{Supp}(N\mathcal{K}_{H,e/\mathbb{C}}G)\cap (\mathbb{C}^p\times\{t\})\subseteq g_t^{-1}(B)$. In that case, $h_t$ is topologically trivial over the Milnor sphere $S_\epsilon$. Then, $V_t:=h_t^{-1}(0)$ is homotopy equivalent to a wedge of  $(p-1)$-dimensional spheres by a theorem of L{\^e} (\cite{le-concept}). Moreover, $\mu_\Delta$ is equal to the sum of the Milnor numbers of the fibres different than $V_t$ (\cite[Theorem 2.3]{siersma}). In other words, $ \sum_{g_t(\textbf{y})\notin V}\mu(h_t;\mathbf{y})=\mu_\Delta$. 

Suppose that as above, $g_t$ induces a stabilisation of $f$. We have $\mathcal{A}_e\textnormal{-codim}(f)=\mathcal{K}_{V,e}\textnormal{-codim}(g)$ (\cite{damon89}, see also \cite[Theorem 8.1]{mond-diff}); moreover, if $H$ is the defining equation of $V$, $\mathcal{K}_{H,e}\textnormal{-codim}(g)\geq \mathcal{K}_{V,e}\textnormal{-codim}(g)$ (both finite at the same time) and with equality if $F$, $g$ are weighted homogeneous for the same positive weights (\cite[Section 3]{damon-mond}). In the nice dimensions, every stable map is equivalent to a weighted homogeneous one. So, $(N\mathcal{K}_{H,e}g_t)_{\textbf{y}}=(N\mathcal{A}_{e}f_t)_{\textbf{y}}=0$  as $f_t$ is stable. Hence, by (\ref{damond-eq}) $\textnormal{dim}_\mathbb{C} N\mathcal{K}_{H,e}g=\mu_\Delta(f)$. Therefore, (\ref{mudelta}) follows from the fact that
$\mathcal{A}_e\textnormal{-codim}(f)=\mathcal{K}_{V,e}\textnormal{-codim}(g)\leq \mathcal{K}_{H,e}\textnormal{-codim}(g)$.

In the case that $p=n+1$, the discriminant coincides with the image. All the statements above can be adopted to this case except for the fact that $N\mathcal{K}_{H,e/\mathbb{C}}G$ is Cohen-Macaulay module of dimension 1 since the image of a stable unfolding is no longer a free divisor. However, there is no evidence that $N\mathcal{K}_{H,e/\mathbb{C}}G$ cannot have that property whence our claim.

A classification of finitely $\mathcal{A}$-determined map-germs can be pursued as an alternative way to attack Conjecture \ref{conjmond}. However, as the dimension and the corank gets higher, classifying such map-germs or even finding examples becomes a difficult task.

In this paper we focus on finding new examples of finitely $\mathcal{A}$-determined map-germs of corank $\geq 2$ in order to test Conjecture \ref{conjmond}.  It is not our aim to obtain a full classification.  In \cite{houston-kirk}, Houston and Kirk gave a classification of  map-germs of corank 1 from $(\mathbb{C}^3,0)$ to $(\mathbb{C}^{4},0)$ with the strata of $\mathcal{A}_e$-codimension $\leq 4$, and showed that all the examples in their list satisfy the conjecture.
We start with the next interesting case: finitely $\mathcal{A}$-determined map-germs of corank 2 in the dimensions $(3,4)$. In Section \ref{geometric}, we give a list of examples which were found by experiment, namely by building finite weighted homogeneous map-germs lying over the classes of 2-jets calculated in \cite[Appendix C]{altintas}.

In Section \ref{examples}, we study finite $\mathcal{A}$-determinacy of 1-parameter unfoldings defined  by a base change operation on stable unfoldings which we will call augmentations. In \cite{cooper}, Cooper defined the \emph{augmentation} of a finite map-germ $f\colon (\mathbb{C}^n,0)\rightarrow (\mathbb{C}^p,0)$  of $\mathcal{A}_e$-codimension 1 to be $A_F\colon (\textbf{x},\lambda)\mapsto (\tilde{F}(\lambda^2,\textbf{x}),\lambda)$ where $F(\textbf{x},u):=(\tilde{F}(\textbf{x},u),u)$ is an $\mathcal{A}_e$-versal unfolding of $f$ and $u\in\mathbb{C}$ ($n<p$). In \cite{houston-aug}, Houston considered augmentations given by
$A_{F,h}(f)\colon (\textbf{x},\textbf{z})\mapsto (\tilde{F}(\textbf{x},h(\textbf{z})),\textbf{z})$ where $h\colon (\mathbb{C}^d,0)\rightarrow (\mathbb{C},0)$ is a germ of a holomorphic function and $F\colon (\mathbb{C}^n\times \mathbb{C},0)\rightarrow (\mathbb{C}^p\times \mathbb{C},0)$ need not be $\mathcal{A}_e$-versal.
In this paper, we introduce a different type of augmentations. We let $F$ be a $d$-parameter unfolding of $f$ with $F(\textbf{x},\textbf{u})=(\tilde{F}(\textbf{x},\textbf{u}),\textbf{u})$ and
$\gamma\colon w\mapsto \gamma(w)\in (\mathbb{C}^d,0)$ a germ of a curve. Then we define the \emph{augmentation} of $f$ by $F$ and $\gamma$ to be the map-germ
$A_{F,\gamma}(f)\colon (\textbf{x},w)\mapsto (\tilde{F}(\textbf{x},\gamma(w)),w)$. Our definition coincides with Houston's for $d=1$ and Cooper's for $\gamma(w)=w^2$.

Suppose that $f\colon (\mathbb{C}^n,0)\rightarrow (\mathbb{C}^{n+1},0)$ is finitely $\mathcal{A}$-determined, $F$ is a stable $d$-parameter unfolding with the image hypersurface $V$. We prove that an augmentation (in our sense) is finitely $\mathcal{A}$-determined if $\gamma$ intersects the $\mathcal{K}_V$-\textit{discriminant} of the identity map $G$ on $(\mathbb{C}^p\times\mathbb{C}^d,0)$, which is the projection of the support of $N\mathcal{K}_{H,e/\mathbb{C}^d}G$ in the parameter space, only at the origin. We present \textit{series} of finitely $\mathcal{A}$-determined map-germs of corank 2 supporting the conjecture in dimensions $(4,5)$ and $(5,6)$. We show that all the map-germs but two in Houston and Kirk's list are augmentations of finitely $\mathcal{A}$-determined map-germs in Mond's classification in \cite{mondclass}.

We use \textsc{Singular} (\cite{singular}) for our calculations on a computer with a 2.4 GHz Intel Core 2 Duo processor and 4 GB memory. The same calculations can be done using other computer algebra programs such as \textsc{Macaulay2} (\cite{macaulay}).

\section{Terminology and Notations}

Our terminology is standard, but the details can be found in \cite{wall} or \cite{martinet}. We denote the space of holomorphic map-germs $f\colon (\mathbb{C}^n,0)\rightarrow (\mathbb{C}^p,0)$ by $\mathcal{E}_{n,p}^0$. The group $\mathcal{A}:=\textnormal{Diff}(\mathbb{C}^n,0)\times \textnormal{Diff}(\mathbb{C}^p,0)$ of local diffeomorphisms  acts on $\mathcal{E}_{n,p}^0$  by $(\phi,\psi)\cdot f\mapsto \psi \circ f \circ \phi^{-1}$ for all $(\phi,\psi)\in\mathcal{A}$.
We say that $f,g\in\mathcal{E}_{n,p}^0$ are $\mathcal{A}$-\textit{equivalent} if $g\in \mathcal{A}\cdot f$. A map-germ $f\in \mathcal{E}_{n,p}^0$ is $\ell$-$\mathcal{A}$-\textit{determined} if every map-germ $g\in\mathcal{E}_{n,p}^0$ with the same $\ell$-jet (at $0$) as $f$ is $\mathcal{A}$-equivalent to $f$. Furthermore, $f$ is  \textit{finitely $\mathcal{A}$-determined} (or $\mathcal{A}$-\textit{finite}) if it is $\ell$-$\mathcal{A}$-determined for some $\ell<\infty$. A map-germ is $\mathcal{A}$-\textit{stable} if any of its unfoldings is $\mathcal{A}$-equivalent to the trivial unfolding $f\times 1$. By fundamental results of Mather, finite determinacy is equivalent to the finite dimensionality of $N\mathcal{A}_ef:=f^*(\Theta_{\mathbb{C}^p,0})/tf(\Theta_{\mathbb{C}^n,0})+f^{-1}(\Theta_{\mathbb{C}^p,0})$, and thus (if $f$ is not stable) to $0\in\mathbb{C}^p$ being an isolated point of instability of $f$. We set  $\mathcal{A}_e\textnormal{-codim}(f):=\textnormal{dim}_{\mathbb{C}}N\mathcal{A}_ef$.

Following \cite{damon87}, for $g\in \mathcal{E}_{s,p}^0$, $(V,0)\subseteq (\mathbb{C}^p,0)$ and $H\in \mathcal{O}_{\mathbb{C}^p,0}$, the normal spaces with respect to $\mathcal{K}_V$ and $\mathcal{K}_H$-equivalences are given respectively by
\begin{align*} N\mathcal{K}_{V,e}g&:=g^*(\Theta_{\mathbb{C}^p,0})/tg(\Theta_{\mathbb{C}^s,0})+g^*\mbox{Der}(\textnormal{-log } V)
\intertext{and }
N\mathcal{K}_{H,e}g&:=g^*(\Theta_{\mathbb{C}^p,0})/tg(\Theta_{\mathbb{C}^s,0})+g^*\mbox{Der}(\textnormal{-log } H)
\end{align*} where $\mbox{Der}(\textnormal{-log } V)$  (resp. $\mbox{Der}(\textnormal{-log } H)$) is the module of logarithmic vector fields tangent to $V$ (resp. to the level sets of $H$). To be more precise,
\begin{align*} \mbox{Der}(\textnormal{-log } V)&:=\left \{ \xi \in \Theta_{\mathbb{C}^p,0} \mid \xi (I(V))\subseteq I(V) \right \}
\intertext{and }
\mbox{Der}(\textnormal{-log } H)&:=\left\{\xi \in \Theta_{\mathbb{C}^p,0} \mid \xi (H)=0 \right\}
\end{align*}
in which $I(V)$ is the ideal of germs vanishing on $V$. Finite $\mathcal{K}_V$ (resp. $\mathcal{K}_H$) determinacy of $g$ is equivalent to the finite dimensionality of $N\mathcal{K}_{V,e}g$ (resp. $N\mathcal{K}_{H,e}g$). By \cite[Proposition 2.2]{damon87}, $g$ is finitely $\mathcal{K}_V$-determined if and only if $g$ is \textit{algebraically transverse} to $V$ in a punctured neighbourhood of $0\in\mathbb{C}^s$, that is, 
$$d_\mathbf{x}g(T_{\mathbf{x}}\mathbb{C}^{s})+\textnormal{Der}(\textnormal{-log }V)(g(\mathbf{x}))=T_{g(\mathbf{x})}\mathbb{C}^{p}$$
for all $\mathbf{x}\in U-\{0\}$ where $U$ is a neighbourhood of $0\in \mathbb{C}^s$ and $\textnormal{Der}(\textnormal{-log }V)(g(\mathbf{x}))$ is the vector space spanned by the generators of $\textnormal{Der}(\textnormal{-log }V)$ evaluated at $g(\mathbf{x})$.  

For a $d$-parameter deformation $G\colon (\mathbb{C}^s\times\mathbb{C}^d,0)\rightarrow (\mathbb{C}^p,0)$ of $g\in \mathcal{E}_{s,p}^0$, \textit{the relative normal spaces} with respect to $\mathcal{K}_V$ and $\mathcal{K}_H$-equivalences are defined respectively by
\begin{align*} N\mathcal{K}_{V,e/\mathbb{C}^d}G&:=G^*(\Theta_{\mathbb{C}^p,0})/tG(\theta_{\mathbb{C}^{s}\times\mathbb{C}^d/\mathbb{C}^d})+G^*\textnormal{Der}(\textnormal{-log }V) 
\intertext{and} 
N\mathcal{K}_{H,e/\mathbb{C}^d}G&:=G^*(\Theta_{\mathbb{C}^p,0})/tG(\theta_{\mathbb{C}^{s}\times\mathbb{C}^d/\mathbb{C}^d})+G^*\textnormal{Der}(\textnormal{-log }H).\end{align*}
The $\mathcal{K}_V$-\textit{discriminant} of $G$ is defined to be $D_V(G):=\rho \left(\textnormal{Supp}(N\mathcal{K}_{V,e/\mathbb{C}^d}G)\right)$ where $\rho\colon (\mathbb{C}^s\times\mathbb{C}^d,0)\rightarrow (\mathbb{C}^d,0)$ is the standard projection. 

Note that here $V$ and $H$ are not necessarily related. However, for our purposes, we will take $V$ to be a hypersurface defined by the function germ $H$. Assume that $f\in \mathcal{E}_{n,p}^0$ is finite and equals the pullback of a stable unfolding $F\in \mathcal{E}_{n+d,p+d}^0$ by an immersion $g\in \mathcal{E}_{p,p+d}^0$ transverse to $F$. Let $V$ be the image of $F$ and $H$ its defining equation. Then \begin{equation}\label{nak} N\mathcal{A}_ef\cong N\mathcal{K}_{V,e}g\end{equation} (\cite{damon89}, see also \cite[Theorem 8.1]{mond-diff}). When $F$ and $g$ are weighted homogeneous of the same weights,  $N\mathcal{K}_{V,e}g=N\mathcal{K}_{H,e}g$ (\cite[Lemma 3.4]{damon-mond}).

For a $f\in\mathcal{E}_{n,p}^0$ with $n<p$, the  \textit{corank} is the $\mathbb{C}$-vector space dimension of the kernel of $\textnormal{d}f(0)$. Let $Q(f):=\mathcal{O}_{\mathbb{C}^n,0}/f^*\mathfrak{m}_{\mathbb{C}^{p},0} $
be the \textit{local algebra} and  $q(f):=\textnormal{dim}_{\mathbb{C}}\hskip2pt Q(f)$ the \textit{multiplicity} of $f$.

\section{Examples and non-examples of finitely determined map-germs from $\mathbb{C}^3$ to $\mathbb{C}^4$}\label{geometric}

The map-germs listed in Table \ref{tablea} are of corank 2, weighted homogenous and finitely $\mathcal{A}$-determined. The ones labeled as $\hat{A}_\bullet$ are a part of the series
\begin{equation}\label{AAk}\hat{A}_k\colon (x,y,z) \mapsto  (x,\ y^k+xz+x^{2k-2}y,\ yz,\  z^2+y^{2k-1})\end{equation}
which lie over the $2$-jet $(x,y^2+xz,yz,z^2)$ for $k=2$ and $(x,xz,yz,z^2)$ for $k\geq 3$.
The map-germs $\hat{B}_\bullet$ are a part of the series
\begin{equation}\label{BBk}\hat{B}_{2\ell+1}\colon (x,y,z) \mapsto  (x,\ y^2+xz, \ z^2+\alpha xy,\ y^{2\ell+1}+ y^{2\ell}z+yz^{2\ell}-z^{2\ell+1}) \end{equation} where $\alpha=-1$ for $\ell=1$, $\alpha=\pm 1$ for $\ell=2,3,4$ and $\alpha=1$ otherwise. They lie over the $2$-jet $(x,y^2+xz,z^2+\alpha xy,0)$.

\begin{table}[ht!]
\caption{The first set of examples}
\label{tablea}\begin{tabular}{@{}cccc@{}} 
Label & $\mathcal{A}_e$-codimension & Weights &  Conjecture \ref{conjmond} \\ 
\hline
$\hat{A}_2$ & $18$ & $(1,2,3)$ & \textnormal{True} \\
$\hat{A}_3$ & $186$ & $(1,2,5)$ & \textnormal{True} \\
$\hat{A}_4$ & $844$ & $(1,2,7)$ & \textnormal{True} \\\hline
$\hat{B}_3$ & $33$ & $(1,1,1)$ & \textnormal{True} \\
$\hat{B}_5$ & $252$ & $(1,1,1)$ & \textnormal{True} \\
$\hat{B}_7$ & $837$ & $(1,1,1)$ & \textnormal{True}\\
$\hat{B}_9$ & $1968$ & $(1,1,1)$ & \textnormal{True} \\
$\hat{B}_{11}$ & $3825$ & $(1,1,1)$ & \textnormal{True} \\
$\hat{B}_{13}$ & $6588$ & $(1,1,1)$ & ? \\
$\hat{B}_{15}$ & $10437$ & $(1,1,1)$ & ?\\ \hline 
\end{tabular}\end{table}

We calculate $\mathcal{A}_e$-codimensions on \textsc{Singular} using the identity (\ref{nak}), and observe that they also satisfy Conjecture \ref{ngcm}.
The data are given for $k=2,3,4$ and $\ell=1,\ldots,7$ because we have been able to get results only for those values of $k$ and $\ell$ due to computer memory restrictions.

\begin{prop}\label{noex1} There are no finitely $\mathcal{A}$-determined homogeneous map-germs of corank 2 in $\mathcal{E}_{3,4}^0$ with degrees $(1,2,2,2d)$, $d\geq 1$.
\end{prop}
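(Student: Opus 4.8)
The plan is to show that the degree constraints force a positive-dimensional set of instabilities, so that $0$ cannot be an isolated point of instability; by Mather's criterion recalled in Section 2 this precludes finite $\mathcal{A}$-determinacy. First I would normalise. Since $f$ is homogeneous with component degrees $(1,2,2,2d)$, only the first component contributes to $\mathrm{d}f(0)$, so corank $2$ forces $f_1$ to be a nonzero linear form; after a linear change of source coordinates we may assume $f_1=x$ and write $f=(x,f_2,f_3,f_4)$. Expanding in powers of $x$, put $f_2=q_2(y,z)+x\,\ell_2(y,z)+\cdots$, $f_3=q_3(y,z)+x\,\ell_3(y,z)+\cdots$ and $f_4=\tilde q(y,z)+x\,m(y,z)+\cdots$, where $q_2,q_3$ are binary quadratics, $\ell_2,\ell_3$ are binary linear forms, $\tilde q$ is a binary form of degree $2d$ and $m$ of degree $2d-1$; the omitted terms carry higher powers of $x$.

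The key observation is a $\mathbb{Z}/2$-equivariance coming from the parities of the degrees: since $1$ is odd and $2,2,2d$ are even, $f(-x,-y,-z)=(-f_1,f_2,f_3,f_4)(x,y,z)$. Hence for every $p$ on the plane $S=\{x=0\}$ one has $f(p)=f(-p)$, so $f$ identifies $p$ with $-p$ and $S$ is carried two-to-one onto a two-dimensional surface of double points $f(S)\subset\{X_1=0\}$. This double locus is the engine of the argument; I would next decide, at a generic $p\in S$, whether the two image sheets through $f(p)$---the images near $p$ and near $-p$---cross transversally, since a double point of a stable map into $\mathbb{C}^{n+1}$ is exactly a transverse crossing of two immersed sheets.

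Using the equivariance to relate $\mathrm{d}f(-p)$ to $\mathrm{d}f(p)$, the sum of the two tangent $3$-planes is $\mathbb{C}^4$ if and only if the columns of
\[ N(y,z)=\begin{pmatrix} \partial_y q_2 & \partial_z q_2 & \ell_2 \\ \partial_y q_3 & \partial_z q_3 & \ell_3 \\ \partial_y \tilde q & \partial_z \tilde q & m \end{pmatrix} \]
are independent, i.e.\ $\det N(p)\neq 0$. Now $\det N$ is homogeneous of degree $1+1+(2d-1)=2d+1>0$ in $(y,z)$, so it either vanishes identically or vanishes on a nonempty union of lines through the origin; in either case its zero set $C\subset S$ is a positive-dimensional cone accumulating at $0$. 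Along $C\setminus\{0\}$ the two sheets fail to be transverse (or one of them is singular, when the first two columns become dependent), so $f$ is not stable at these points; provided $f$ is finite---which I would check separately, as otherwise $f$ is trivially not $\mathcal{A}$-finite---their images $f(C\setminus\{0\})$ form a positive-dimensional set of instabilities distinct from $0$. Thus the instability locus is not isolated and $f$ is not finitely $\mathcal{A}$-determined.

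The main obstacle is the step identifying $\{\det N=0\}$ with genuine instability: I must rule out that the non-transverse points are ``accidentally'' stable multi-germs, for instance honest transverse triple points, which are stable. This is handled by noting that a third preimage of $f(p)$ would again have to lie on $S$ and hence be a further point of the generic fibre of $g_0=(q_2,q_3,\tilde q)$; such higher multiplicity occurs only on a proper subvariety of $C$, so along a generic branch of $C$ one has genuine tangential double points, which never appear in stable maps. A little care is also needed for the degenerate cases where $g_0$ is not generically two-to-one or not finite, but these only enlarge the instability locus, so the conclusion is unaffected.
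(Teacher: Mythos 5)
Your route is genuinely different from the paper's: the paper reduces to the normal form $(x,\,y^2+axz,\,z^2+bxy,\,P_{2d})$, disposes of $d=1$ by computing that the double point scheme is not an isolated singularity, and for $d\geq 2$ exhibits a whole line of quadruple points $(0,\pm t,\pm\alpha_1 t)$; you instead use the $\mathbb{Z}/2$-symmetry to produce the double-point surface $f(\{x=0\})$ and then detect a cone $C=\{\det N=0\}$ of non-transverse double points, uniformly in $d$. Your computation that $\det N$ is homogeneous of degree $2d+1$, hence has a nontrivial zero cone, is correct, and the triple-point worry in your last paragraph is unnecessary: a sub-multigerm of a stable multigerm is stable, so once the bigerm at $\{p,-p\}$ is unstable, additional preimages cannot rescue stability.

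The genuine gap is in the step ``the two sheets fail to be transverse (or one of them is singular, when the first two columns become dependent), so $f$ is not stable at these points.'' This is justified only when both branches are immersions, where coinciding tangent hyperplanes violate Mather's normal-crossings criterion. But $\det N$ also vanishes wherever the first two columns of $N$ are dependent, i.e.\ wherever $f|_{\{x=0\}}$ is singular, and this locus can be an entire line $L$, possibly exhausting a component of $C$ (take $q_2=y^2$, $q_3=z^2$, $\tilde q=y^2z^{2d-2}$: the Jacobian of $(q_2,q_3,\tilde q)$ drops rank along $y=0$). At such points your dichotomy proves nothing: a singular monogerm $(\mathbb{C}^3,p)\rightarrow(\mathbb{C}^4,f(p))$ can be stable (a cross-cap), and stable maps in these dimensions do carry curves of cross-cap points, so ``one of them is singular'' does not by itself imply instability, and the later remark that degenerate cases ``only enlarge the instability locus'' assumes exactly what must be proved. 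The step can be repaired: by the symmetry \emph{both} branches at $p$ and $-p$ are singular there, and a bigerm of two stable singular germs is never stable in these dimensions, since the subspace $\tau\subset T_{f(p)}\mathbb{C}^4$ attached to the cross-cap $(x_1,x_2,z)\mapsto(x_1,x_2,z^2,x_1z)$ in Mather's general-position criterion has codimension $3$, so $\operatorname{codim}\tau_p+\operatorname{codim}\tau_{-p}=6>4$ (and a branch worse than a cross-cap is already an unstable monogerm). Some such argument must be supplied; the finiteness and not-generically-two-to-one caveats, by contrast, are handled acceptably as sketched.
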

\begin{proof} A finite homogeneous map-germ of corank $2$ with degrees $(1,2,2,2d)$ is $\mathcal{A}$-equivalent to
\[h\colon (x,y,z)\mapsto \left(x,y^2+axz,z^2+bxy,P_{2d}(x,y,z)\right)\]
where $a,b\in \mathbb{C}$ and $P_{2d}(x,y,z)$ is a homogeneous polynomial of degree $2d$.
Assume that $d=1$. Then,
\[h\sim_\mathcal{A} h'\colon (x,y,z)\mapsto \left(x,y^2+axz,z^2+bxy,cxy+dxz+eyz\right)\]
with $c,d,e\in \mathbb{C}$. A calculation on \textsc{Singular} shows that the \textit{double point scheme} $D^2(h')$ is not an isolated singularity whence $h'$ is not finitely $\mathcal{A}$-determined (see, for example, \cite[Proposition 2.1.13]{altintas}). Now, assume that $d\geq 2$. We will show that $h$ forms a line of quadruple points in the image which will imply that it is not $\mathcal{A}$-finite. Let us consider two lines $L_{+}\colon t\mapsto (0,t,\alpha t)$ and $L_{-}\colon t \mapsto (0,t ,-\alpha t)$ where $\alpha\in\mathbb{C}-\{0\}$. We have
\begin{eqnarray*} h(0,t,\alpha t)&=&(0,t^2,\alpha^2t^2, P_{2d}(0,t,\alpha t)) 
\\
 h(0,t,-\alpha t)&=&(0,t^2,\alpha^2t^2, P_{2d}(0,t,-\alpha t)).
\end{eqnarray*}
Write $P_{2d}(x,y,z)=\sum_{l+m+n=2d}a_{lmn}x^ly^mz^n$ where $a_{lmn}\in \mathbb{C}$, for all $l,m,n$. So,
 \begin{eqnarray} \label{p2deq}
P_{2d}(0,t,\alpha t))=P_{2d}(0,t,-\alpha t)) &\Leftrightarrow &\sum_{m+n=2d} a_{0mn}\alpha^n= \sum_{m+n=2d} a_{0mn}(-\alpha)^n \cr &\Leftrightarrow  & \sum_{\substack{m+2i-1=2d \\ i\geq 1}} 2a_{0mn}\alpha^{2i-1}=0.
\end{eqnarray}
Let $\alpha_1$ be a solution of (\ref{p2deq}). Then
\[h(0,t,\alpha_1 t)=h(0,t,-\alpha_1 t)=h(0,-t,-\alpha_1 t)=h(0,-t,\alpha_1 t)\]  whence the result.
\end{proof}

\begin{rem} Notice that $\mathcal{A}_e$-codimensions of our examples are quite high. The smallest codimension we have encountered so far is $\mathcal{A}_e$-codim$(\hat{A}_2)=18$ among the map-germs of corank 2 in these dimensions. It would be very interesting to see if there exist weighted homogeneous map-germs of lower $\mathcal{A}_e$-codimension in the corank 2 case aside from the stable map-germs. The minimal $\mathcal{A}^2$-codimension, that is, the vector space dimension of the complement of $tf(\Theta_{\mathbb{C}^3,0})+f^{-1}(\Theta_{\mathbb{C}^4,0})+\mathfrak{m}_{\mathbb{C}^3,0}^3\cdot \Theta(f)$ in $\mathfrak{m}_{\mathbb{C}^3,0}^2\cdot \Theta(f)$, among the $2$-jets of corank 2 map-germs in $\mathcal{E}_{3,4}^0$ is $6$ (see \cite[Appendix C]{altintas}). This fact suggests that minimal $\mathcal{A}_e$-codimension should be bigger than $6$.  \end{rem}

\begin{rem} Finitely $\mathcal{A}$-determined map-germs of corank 3 are even harder to find. One could start with producing stable corank 3 map-germs by unfolding a finite map-germ of corank 3 in 3 variables as described by Mather in Lemma 5.9 and Theorem 5.10 of \cite{matherIV}, then study nonlinear sections of singularities defined by those stable map-germs to look for $\mathcal{A}$-finite corank 3 map-germs in lower dimensions. However, new map-germs obtained this way may not be weighted homogenous even if the original map-germ is, as the weights of the unfolding parameters may be higher than the degree of the map-germ.
\end{rem}

\section{A new type of augmentation}\label{examples}

In this section, we introduce a new construction of 1-parameter unfoldings to generate new examples of finitely $\mathcal{A}$-determined map-germs from old.

\begin{defn}[cf.\ XIII, 1.4, \cite{martinet}]
 Let $F\in\mathcal{E}_{n+d,p+d}^0$ be a $d$-parameter unfolding of a finite map-germ $f\in\mathcal{E}_{n,p}^0$ given by $F(\textbf{x},\textbf{u})=(F_{\textbf{u}}(\textbf{x}),\textbf{u})$ for $\textbf{u}\in\mathbb{C}^d$, and
  $\gamma\colon (\mathbb{C},0)\rightarrow (\mathbb{C}^d,0)$ be a holomorphic map-germ. We define the \textit{augmentation} $A_{F,\gamma}(f)$ of $f$ by $F$ and $\gamma$ to be the map-germ
\begin{eqnarray*} A_{F,\gamma}(f)\colon (\mathbb{C}^{n}\times \mathbb{C},0)&\rightarrow &(\mathbb{C}^{p}\times \mathbb{C},0) \\
(\textbf{x},w)&\mapsto & (F_{\gamma(w)}(\textbf{x}),\ w).\end{eqnarray*}
In this setting, we will refer to $f$ as the \textit{initial map-germ} of $A_{F,\gamma}(f)$.
\end{defn}

Notice that $A_{F,\gamma}(f)$ is a 1-parameter unfolding of $f$ and its  corank equals the corank of $f$. Moreover, $Q(f)\cong Q(A_{F,\gamma}(f))$. We also have the pull-back diagrams 
\begin{equation}\label{eqpullback}\xymatrix{(\mathbb{C}^n\times \mathbb{C}^d,0) \ar[r]^{F} & (\mathbb{C}^p\times \mathbb{C}^d,0) \\
(\mathbb{C}^n\times\mathbb{C},0) \ar[r]^{A_{F,\gamma}(f)} \ar[u] & (\mathbb{C}^p\times\mathbb{C},0) \ar[u]_{\hat{g}\colon  (\textbf{Y},w) \mapsto (\textbf{Y},\gamma(w))}\\
(\mathbb{C}^n,0) \ar[r]^{f} \ar[u] & (\mathbb{C}^p,0) \ar[u]_{i\colon \mathbf{Y}\mapsto (\mathbf{Y},0)}}\end{equation} with $\textbf{Y}=(Y_1,\ldots,Y_{p})\in \mathbb{C}^p$ and $w\in \mathbb{C}$.

Now we will state a criterion for finite determinacy of augmentations if $p=n+1$. 

\begin{thm}\label{newex}  Let $F\in \mathcal{E}_{n+d,n+d+1}^0$ be a parametrised stable unfolding of  an $\mathcal{A}$-finite $f\in\mathcal{E}_{n,n+1}^0$ and  $V$ the image of $F$. Let $G$ be the identity on $(\mathbb{C}^{n+1}\times\mathbb{C}^d,0)$. Assume that  $\gamma\colon (\mathbb{C},0) \rightarrow (\mathbb{C}^d,0)$ is a non-constant map-germ parametrising a curve which intersects  $D_V(G)$ only at the origin.  Then, $A_{F,\gamma}(f)$ is also $\mathcal{A}$-finite. \end{thm}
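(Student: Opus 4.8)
The plan is to characterise $\mathcal{A}$-finiteness of the augmentation $A_{F,\gamma}(f)$ via the criterion that $0$ is an isolated instability point, and to translate this condition about $\mathcal{A}$-equivalence into a condition about $\mathcal{K}_V$-equivalence using the identity (\ref{nak}). The key observation is that $A_{F,\gamma}(f)$ is itself a pullback of the stable unfolding $F$: indeed, writing $G_\gamma\colon (\mathbb{C}^p\times\mathbb{C},0)\to(\mathbb{C}^p\times\mathbb{C}^d,0)$ for the map $(\textbf{Y},w)\mapsto(\textbf{Y},\gamma(w))$, we have that $A_{F,\gamma}(f)$ is obtained by pulling $F$ back along $G_\gamma$. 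So the strategy is to reduce the finite determinacy of the augmentation to properties of the composite $G_\gamma$, and in particular to its interaction with the free divisor $V=\operatorname{image}(F)$.

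First I would set up the equivalence: since $F$ is a stable unfolding and $V$ its image with defining equation $H$, a map-germ pulled back from $F$ by an immersion transverse to $F$ has $N\mathcal{A}_e(\text{pullback})\cong N\mathcal{K}_{V,e}(\text{the immersion})$ by (\ref{nak}). The germ $A_{F,\gamma}(f)$ is $\mathcal{A}$-finite if and only if its only instability in a neighbourhood of $0$ is at $0$, which is a statement about the support of $N\mathcal{K}_{V,e}$ of the associated germ $G_\gamma$ along the curve. Concretely, away from $0$ the curve $\gamma$ and the ambient identity $G$ give a deformation picture in which $A_{F,\gamma}(f)$ restricts, at each parameter value $w\neq 0$, to a pullback of $F$ by the immersion $\textbf{Y}\mapsto(\textbf{Y},\gamma(w))$.

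The crux is then to show that the hypothesis ``$\gamma$ meets $D_V(G)$ only at $0$'' forces all these restricted germs to be stable for $w\neq 0$ small, so that no instability accumulates at $0$ along the augmentation direction. Here I would invoke the definition $D_V(G)=\rho(\operatorname{Supp}(N\mathcal{K}_{V,e/\mathbb{C}^d}G))$: a parameter value $\textbf{u}\in(\mathbb{C}^d,0)$ lies in $D_V(G)$ precisely when the slice germ has nonzero $\mathcal{K}_{V,e}$-contribution, i.e.\ the corresponding pullback of $F$ is unstable. Since $\gamma(w)\in D_V(G)$ only at $w=0$, for every sufficiently small $w\neq 0$ the value $\gamma(w)$ avoids the discriminant, so the slice is $\mathcal{K}_{V,e}$-trivial, hence stable; thus the only unstable slice sits over $w=0$. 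Combined with the $\mathcal{A}$-finiteness of $f$ itself (guaranteeing the instability over $w=0$ is isolated within that slice), this confines all instability of $A_{F,\gamma}(f)$ to the origin. Finiteness of $N\mathcal{A}_e A_{F,\gamma}(f)$ then follows from the coherence/finite-support argument standard in this setting.

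The main obstacle I anticipate is the passage from ``each positive-dimensional slice is stable'' to ``the total germ $A_{F,\gamma}(f)$ has an isolated instability at $0$.'' One must control the instability locus of the whole family, not merely slice by slice, which requires a coherence argument for the module $N\mathcal{K}_{V,e/\mathbb{C}}$ of the augmentation and a genericity statement ensuring that $\gamma$ being non-constant makes the map $G_\gamma$ a genuine immersion transverse to $F$ off the origin. I would therefore spend the most care verifying transversality of $G_\gamma$ to $F$ away from $0$ (so that (\ref{nak}) applies there) and checking that the support of the relevant relative normal module is contained in $\rho^{-1}(D_V(G))\cap\operatorname{image}(\gamma)$, which by hypothesis is the single point $0$.
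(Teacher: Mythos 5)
Your proposal takes essentially the same route as the paper: realise $A_{F,\gamma}(f)$ as the pullback of $F$ along $\hat{g}\colon(\textbf{Y},w)\mapsto(\textbf{Y},\gamma(w))$, invoke (\ref{nak}) (transversality of $\hat{g}$ to $F$ is immediate from that of $\textbf{Y}\mapsto(\textbf{Y},0)$, so the genericity worry you raise does not arise), and use $\textnormal{im}(\gamma)\cap D_V(G)=\{0\}$ together with the $\mathcal{A}$-finiteness of $f$ to confine $\textnormal{Supp}(N\mathcal{K}_{V,e}\hat{g})$ to the origin. The paper executes the final localisation not slice-by-slice but by a base change identifying the support of $\Theta(\hat{g})/\rho$ modulo $\hat{g}^*t\rho(\textnormal{Der}(\textnormal{-log }V))$ with the restriction of $\textnormal{Supp}(N\mathcal{K}_{V,e/\mathbb{C}^d}G)\times\mathbb{C}$ to $\mathbb{C}^p\times\textnormal{graph}(\gamma)$, which settles in one step the coherence issue you flag about passing from stable slices to an isolated instability of the total germ.
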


\begin{proof}  Consider  $A_{F,\gamma}(f)$ as a pull-back of $F$ by $\hat{g}$ as in (\ref{eqpullback}). Then, $\hat{g}$ is transverse to $F$ since $g:=\hat{g}\circ i \colon \textbf{Y} \mapsto (\textbf{Y},0)$ is. Consequently, $N\mathcal{A}_{e}A_{F,\gamma}(f)\cong N\mathcal{K}_{V,e}\hat{g}$. Hence, it suffices to show that the support of $N\mathcal{K}_{V,e}\hat{g}$ consists of the origin at most. 

By definition, $D_V(G)$ is the set of points $\textbf{U}\in \mathbb{C}^d$ for which $G(-,\textbf{U})$ fails to be algebraically transverse to $V$ at $(\mathbf{Y},\mathbf{U})$. We have $G(\mathbf{Y},\gamma(w))=\hat{g}(\mathbf{Y},\gamma(w))$. So, by the choice of $\gamma$, $\hat{g}$ is algebraically transverse to $V$ at $(\mathbf{Y},w)$ for any $w\neq 0$. Since $f$ is $\mathcal{A}$-finite, $g$ is algebraically transverse to $V$ off the origin which implies that $\mathbb{C}^{n+1}\times \{0\}$ is transverse to $V$ at all points except $(0,0)$. Hence, the support of $N\mathcal{K}_{V,e}\hat{g}$ intersects $\mathbb{C}^{n+1}\times \{0\}$ only at the origin. This concludes the proof.
\end{proof}

\begin{rem} The converse of Theorem \ref{newex} does not always hold. For instance, consider $\hat{A}_2$ (see Table \ref{tablea})  which is an augmentation of $f\colon (y,z)\mapsto (y^2,yz,z^2+y^3)$ by 
$F(y,z,\mathbf{u})=(y^2+u_1y+u_2z,\ yz,\  z^2+y^{3}+u_3y+u_4z,\ \mathbf{u})$
and $\gamma(x)=(x,x^2,0,0)$. The image of $\gamma$ intersects $D_V(G)$ only at $\mathbf{0}$. However, $f$ is not finitely $\mathcal{A}$-determined. Still, this fact does not suggest that $\hat{A}_2$ cannot be equivalent to an augmentation of an $\mathcal{A}$-finite map-germ (see Remark \ref{remAB}). We leave this problem for further study.
\end{rem}

\subsection{Examples of augmentations in dimensions $(3,4)$}

\begin{rem}\label{remAB} The map-germs listed in Table \ref{tablea} are also augmentations of $\mathcal{A}$-finite map-germs. The series $\hat{A}_k$ given by (\ref{AAk}) is $\mathcal{A}$-equivalent to the map-germ
\begin{equation}\label{AAk2}(x,y,z)\mapsto  (x,\ y^k+xz+(x+yz)^{2k-2}y+yz^2,\ yz,\  z^2+y^{2k-1}).\end{equation}
The initial map-germ of (\ref{AAk2}) is
$$h_k\colon (y,z)\mapsto   (y^k +y^{2k-1}z^{2k-2}+yz^2,\ yz,\  z^2+y^{2k-1}).$$
We see that the projection $D^2_1(h_k)$ of the double point scheme $D^2(h_k)$ into $\mathbb{C}^2$ is an isolated singularity for $k=2,3,4$. Hence $h_k$ is finitely $\mathcal{A}$-determined (see \cite{mond87} for the criteria).  On the other hand, the initial map-germ of $\hat{B}_{2\ell+1}$, given by (\ref{BBk}), is
$$\hat{h}_{2\ell+1} \colon (y,z)\mapsto (y^2, \ z^2,\ y^{2\ell+1}+y^{2\ell}z+yz^{2\ell}-z^{2\ell+1}).$$
Finite $\mathcal{A}$-determinacy of $\hat{h}_1$ was stated in \cite{bruce-marar}. For $\ell\geq 1$, it again follows from the fact that $D^2_1$ is an isolated singularity which can be checked by a calculation. In fact, $\mathcal{A}_e\textnormal{-codim}(\hat{h}_{2\ell+1})=10\ell^2+2\ell$ for all $\ell\geq 1$ by Mond's formula for $\mathcal{A}_e$-codimension of weighted homogeneous map-germs in $\mathcal{E}_{2,3}^0$ (\cite{mond-novanish}). 
\end{rem}

\begin{prop}\label{brucemarar1} The map-germ
\begin{equation}\label{ex-bruce}f_\ell\colon (x,y,z)\mapsto (x,y^2+x^\ell z,\ z^2-x^\ell y,\ y^3+y^2z+yz^2-z^3)\end{equation}
is finitely $\mathcal{A}$-determined and satisfies Conjecture \ref{conjmond} for all $\ell\geq 1$.
\end{prop}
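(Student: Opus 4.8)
The plan is to exhibit $f_\ell$ as an augmentation of a known finitely $\mathcal{A}$-determined map-germ and then apply Theorem \ref{newex} for finite determinacy together with Proposition \ref{propng} (via Conjecture \ref{ngcm}) to obtain the inequality in Conjecture \ref{conjmond}. First I would identify the initial map-germ. The target coordinates $y^2+x^\ell z$ and $z^2 - x^\ell y$ suggest that $f_\ell$ is the augmentation $A_{F,\gamma}(\hat h_1)$ of the corank-$2$ surface map-germ
\begin{equation*}
\hat h_1\colon (y,z)\mapsto (y^2,\ z^2,\ y^3+y^2z+yz^2-z^3),
\end{equation*}
whose finite $\mathcal{A}$-determinacy is recorded in \cite{bruce-marar} and in the preceding remark. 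Here $F$ is a parametrised stable $2$-parameter unfolding of $\hat h_1$, and $\gamma$ is the curve-germ $w\mapsto (w^\ell,-w^\ell)$ (up to the chosen coordinates on the parameter space), so that substituting $\gamma(w)$ into the unfolding parameters produces precisely the terms $x^\ell z$ and $-x^\ell y$ with $w$ renamed $x$. I would first write down an explicit stable unfolding $F$ of $\hat h_1$, check that it is stable by verifying that the unfolding parameters span $N\mathcal{A}_e\hat h_1$, and then confirm that the substitution recovers $f_\ell$ after an $\mathcal{A}$-equivalence.

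Granting this identification, finite $\mathcal{A}$-determinacy of $f_\ell$ follows from Theorem \ref{newex} once I verify that $\gamma$ meets the $\mathcal{K}_V$-discriminant $D_V(G)$ (with $G$ the identity) only at the origin. Since $\gamma$ is non-constant for every $\ell\ge 1$, this amounts to a transversality-type condition on the image curve $\{(w^\ell,-w^\ell)\}$ relative to the discriminant locus of the stable unfolding; I would check it directly from the defining data of $V$, or reduce it to the statement that $\hat h_1$ is $\mathcal{A}$-finite so that the origin is isolated in the support of $N\mathcal{K}_{V,e}g$, as in the proof of Theorem \ref{newex}. For the Mond-conjecture inequality I would invoke Proposition \ref{propng}: the multiplicity hypothesis $q(\gamma)\ge 2$ holds because $\gamma(w)=(w^\ell,-w^\ell)$ has multiplicity $\ell\ge 1$, and more to the point $\gamma$ is singular (the two components agree up to sign), giving $q(\gamma)\ge 2$ for every $\ell$. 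Since $\hat h_1$ is weighted homogeneous, one has $N\mathcal{K}_{V,e}=N\mathcal{K}_{H,e}$ by Corollary 3.18 of \cite{damon-mond}, so the freeness of the relative normal module for a $1$-parameter deformation of the initial germ transfers to $\hat G$, and Conjecture \ref{ngcm} holds for $f_\ell$.

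The final step is to deduce $\mathcal{A}_e\text{-codim}(f_\ell)\le\mu_I(f_\ell)$, i.e.\ Conjecture \ref{conjmond}, from Conjecture \ref{ngcm} for $f_\ell$ via the Damon--Mond free-deformation argument reproduced in the introduction: once $N\mathcal{K}_{H,e/\mathbb{C}}\hat G$ is Cohen--Macaulay of dimension $1$, the $\mathcal{K}_{H,e}$-dimension splits as a sum of local Milnor numbers over the points off $V$, and stability of the germ at points on $V$ kills the remaining contributions. Because $f_\ell$ is weighted homogeneous one obtains equality, and the conjecture is verified for all $\ell\ge 1$.

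The main obstacle I anticipate is the determinacy input at the level of the initial surface germ and the exact discriminant computation. Establishing that $\gamma$ meets $D_V(G)$ only at the origin for \emph{every} $\ell$ (rather than for finitely many checked values) requires a clean description of the discriminant of the stable unfolding of $\hat h_1$, and showing that the family of curves $\{(w^\ell,-w^\ell)\}$ stays transverse to it uniformly in $\ell$. Relatedly, confirming $\mathcal{A}$-finiteness of $\hat h_1$ independently of the machine computation in the remark---ideally through the isolated-singularity criterion for the double-point scheme of \cite{mond87}---is the technically delicate part; everything downstream is then a direct application of Theorem \ref{newex} and Proposition \ref{propng}.
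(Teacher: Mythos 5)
Your overall strategy---realise $f_\ell$ as an augmentation of $\hat h_1\colon (y,z)\mapsto (y^2,z^2,y^3+y^2z+yz^2-z^3)$ along a curve of the form $w\mapsto(w^\ell,-w^\ell,0,\dots,0)$, get $\mathcal{A}$-finiteness from Theorem \ref{newex}, and get Conjecture \ref{ngcm} (hence Conjecture \ref{conjmond}) from Proposition \ref{propng}---is exactly the paper's. But there is a genuine gap in the case $\ell=1$, caused by a wrong multiplicity computation. For $\gamma(w)=(w^\ell,-w^\ell,0,\dots,0)$ the local algebra is $Q(\gamma)=\mathcal{O}_{\mathbb{C},0}/\gamma^*\mathfrak{m}=\mathbb{C}\{w\}/(w^\ell)$, so $q(\gamma)=\ell$. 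Your claim that $\gamma$ is ``singular'' because its two nonzero components agree up to sign is not correct: the image $\{(t,-t)\}$ is a smoothly embedded line and $\gamma$ is an immersion when $\ell=1$, so $q(\gamma)=1$ and the hypothesis $q(\gamma)\geq 2$ of Proposition \ref{propng} fails. Consequently your argument does not cover $\ell=1$ at all; the paper handles this case by a separate direct verification that $f_1$ satisfies Conjecture \ref{ngcm}, and only then invokes Proposition \ref{propng} for $\ell\geq 2$. You need to add such a base-case check.

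Two smaller points. First, a $2$-parameter unfolding of $\hat h_1$ of the kind you describe is very unlikely to be stable: a stable unfolding is the same thing as an $\mathcal{A}_e$-versal one, so the number of parameters must be at least $\mathcal{A}_e\textnormal{-codim}(\hat h_1)$, and the paper's stable unfolding of $\hat h_1$ uses five parameters ($u_1z$ in the second coordinate, $u_2y+u_3z$ in the third, $u_4y+u_5z$ in the fourth), with $\gamma\colon x\mapsto(x^\ell,-x^\ell,0,0,0)$. Your parenthetical ``up to the chosen coordinates on the parameter space'' papers over this, but the stability of $F$ is a hypothesis of Theorem \ref{newex} and must be met with enough parameters. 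Second, your proposed reduction of the condition $\textnormal{im}(\gamma)\cap D_V(G)=\{0\}$ to the $\mathcal{A}$-finiteness of $\hat h_1$ does not work: finiteness only gives that the fibre of $\textnormal{Supp}(N\mathcal{K}_{V,e/\mathbb{C}^d}G)$ over $\textbf{u}=0$ is $\{0\}$, while $D_V(G)$ is in general a proper hypersurface in the parameter space that the curve must be shown to avoid away from the origin; this is a genuine computation (which you rightly flag as the main obstacle, but the suggested shortcut is not a valid substitute).
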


\begin{proof} We observe that $f_\ell$ is an augmentation of 
\[ \hat{h}_1\colon (y,z)\mapsto (y^2,\ z^2,\ y^3+y^2z+yz^2-z^3)\]
by
\[F(y,z,\textbf{u})=(y^2+u_1z,\ z^2+u_2y+u_3z,\  y^3+y^2z+yz^2-z^3+u_4y+u_5z,\  \textbf{u})\]
and $\gamma\colon x\mapsto (x^\ell,-x^\ell,0,0,0)$. Let $G$ be the identity on $(\mathbb{C}^2\times \mathbb{C}^5,0)$. The following calculation on \textsc{Singular} shows that the image of $\gamma$ cuts $D_V(G)$ only at $\{0\}$.  

\vskip7pt
\noindent\verb|LIB "matrix.lib";|\\
\verb|ring T=0,(Y,Z,W,U1,U2,U3,U4,U5),(wp(1,4,5,6,2,3,4));|\\
\verb|ring S=0,(y,z,u1,u2,u3,u4,u5),(wp(1,2,3,2,3,4));|\\
\verb|ideal p=0;|\\
\verb|map F=T,y2+u1*z,z2+u2*y+u3*z,y3+y2z+yz2-z3+u4*y+u5*z,u1,u2,u3,u4,u5;|\\
\verb|setring T;|\\
\verb|ideal H=preimage(S,F,p);|\\ 
\verb|ideal jh=jacob(H);|\\
\verb|module derv=modulo(jh,H);| \quad  \quad \quad \quad \hskip18pt  $\setminus \setminus$ $\verb|derv|:=\textnormal{Der}(\textnormal{-log }V)$\\
\verb|def tkv=submat(derv,4..8,1..ncols(derv));|   \quad $\setminus \setminus$ $\verb|tkv|:=t\rho(\textnormal{Der}(\textnormal{-log }V))$\\
\verb|ideal sup=std(minor(tkv,5));|    \quad    \hskip32pt    $\setminus \setminus$ annihilator of $N\mathcal{K}_{V,e/\mathbb{C}^5}G$\\
\verb|ideal ID=eliminate(sup,YZW);|          \quad   \hskip32pt  $\setminus \setminus$ ideal of $D_V(G)$ \\
\noindent \verb|ideal intersect=ID+(U1+U2,U3,U4,U5);|  $\setminus \setminus$ intersection of $D_V(G)$ and $\textnormal{im}(\gamma)$ \\
\verb|radical(intersect);| \\

\noindent Hence, finite $\mathcal{A}$-determinacy follows from Theorem \ref{newex}. In order to show that $f_\ell$ satisfies the conjecture, we consider the 1-parameter deformation 
$$\hat{G}_\ell\colon (Y,Z,W,x,t)\mapsto (Y,Z,W,t+x^\ell, -t-x^\ell, t,t^2,0)$$ of $\hat{g}_\ell$ which induces $f_\ell$ from $F$. Notice that 
$$N\mathcal{K}_{V,e/\mathbb{C}}\hat{G}_\ell/ (t) N\mathcal{K}_{V,e/\mathbb{C}}\hat{G}_\ell  \cong N\mathcal{K}_{V,e}\hat{g}_\ell.$$
As $\textnormal{dim}N\mathcal{K}_{V,e}\hat{g}_\ell=0$, we have $\textnormal{dim} N\mathcal{K}_{V,e/\mathbb{C}}\hat{G}_\ell \leq 1$. On the other hand, for $\ell\geq 2$, 
$$N\mathcal{K}_{V,e/\mathbb{C}}\hat{G}_\ell/ (x) N\mathcal{K}_{V,e/\mathbb{C}}\hat{G}_\ell  \cong N\mathcal{K}_{V,e/\mathbb{C}}G$$
where $G(Y,Z,W,t):=G(Y,Z,W,0,t)$.  By a calculation on \textsc{Singular}, we find that $N\mathcal{K}_{V,e/\mathbb{C}}G$ has dimension $1$. Consequently, $\textnormal{dim} N\mathcal{K}_{V,e/\mathbb{C}}\hat{G}_\ell \geq 1$. Therefore, we must have $\textnormal{dim} N\mathcal{K}_{V,e/\mathbb{C}}\hat{G}_\ell = 1$. Thus, $t$ is a $N\mathcal{K}_{V,e/\mathbb{C}}\hat{G}_\ell$-regular element and $\hat{G}_\ell$ induces a stabilisation of $f_\ell$ for $t\neq 0$ and $\ell\geq 2$. The statement for $\ell=1$ can be checked by a direct calculation. Similarly, one can show that $t$ is also a $N\mathcal{K}_{H,e/\mathbb{C}}\hat{G}_\ell$-regular element. Hence, $N\mathcal{K}_{H,e/\mathbb{C}}\hat{G}_\ell$ is a free module over $\mathcal{O}_{\mathbb{C},0}$. This concludes the proof. 
\end{proof}

\subsection{Examples of augmentations in dimensions $(4,5)$}

\begin{prop}\label{CCk} Each map-germ in the series
\begin{eqnarray*}
\hat{C}_\ell\colon (\mathbb{C}^4,0)&\rightarrow &(\mathbb{C}^5,0) \\
(x,y,z,w)&\mapsto & (x,\ y^2+xz+x^2y,\ yz+w^\ell y,\ z^2+y^3,\ w)
\end{eqnarray*}
is $\mathcal{A}$-finite with $\mathcal{A}$-codimension $30\ell-18$ and satisfies Conjecture \ref{conjmond}  for all $\ell\geq 1$.
\end{prop}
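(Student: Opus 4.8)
The plan is to recognize $\hat{C}_\ell$ as an augmentation and then invoke the two main tools of this section, Theorem~\ref{newex} for finite determinacy and Proposition~\ref{propng} for the conjecture. First I would identify the initial map-germ and the unfolding data. Setting $w=0$ kills the $w^\ell y$ term and drops the final coordinate, so the initial map-germ is
\[
h\colon (x,y,z)\mapsto \left(x,\ y^2+xz+x^2y,\ yz,\ z^2+y^3\right),
\]
which is exactly $\hat{A}_2$ from Table~\ref{tablea}, already known to be $\mathcal{A}$-finite, weighted homogeneous, and to satisfy Conjecture~\ref{conjmond} with $\mathcal{A}_e\textnormal{-codim}=18$. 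I would then exhibit a parametrised stable unfolding $F$ of $h$ together with a curve $\gamma\colon (\mathbb{C},0)\to(\mathbb{C}^d,0)$ so that $\hat{C}_\ell=A_{F,\gamma}(h)$; here the augmenting parameter enters only through the single term $w^\ell y$ in the third coordinate, so $\gamma$ should be (up to reordering the unfolding parameters) the monomial curve $w\mapsto w^\ell$ in the direction dual to that unfolding vector field, padded by zeros in the remaining directions. The key point is that $q(\gamma)=\ell$, so $q(\gamma)\geq 2$ precisely when $\ell\geq 2$, matching the hypothesis of Proposition~\ref{propng}.

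Next I would verify the transversality/finite-determinacy condition. With $G$ the identity on $(\mathbb{C}^4\times\mathbb{C}^d,0)$ and $V$ the image of $F$, Theorem~\ref{newex} reduces $\mathcal{A}$-finiteness of $\hat{C}_\ell$ to checking that $\operatorname{im}(\gamma)$ meets the $\mathcal{K}_V$-discriminant $D_V(G)$ only at the origin. Since $\gamma$ is a non-constant monomial curve and $D_V(G)$ is a proper analytic subset of $(\mathbb{C}^d,0)$ not containing the $\gamma$-direction, this is the generic situation; concretely I would confirm that for $w\neq 0$ small the perturbed germ $F_{\gamma(w)}$ is stable, equivalently that $\gamma(w)\notin D_V(G)$ off the origin. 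This can be read off from the classification of $h=\hat{A}_2$, or verified on SINGULAR as the paper does elsewhere. That gives $\mathcal{A}$-finiteness for every $\ell\geq 1$.

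For the conjecture I would split into $\ell=1$ and $\ell\geq 2$. The base case $\ell=1$ is checked directly: one computes that $\hat{C}_1$ satisfies Conjecture~\ref{ngcm} (via a SINGULAR calculation of the relevant relative normal space, exactly as for the $(3,4)$ examples), and since $\hat{C}_1$ is weighted homogeneous the displayed Damon--Mond argument then yields equality in~(\ref{mui}). For $\ell\geq 2$ I would apply Proposition~\ref{propng}: because the initial germ $h=\hat{A}_2$ satisfies Conjecture~\ref{ngcm} and $q(\gamma)=\ell\geq 2$, the proposition produces a $1$-parameter deformation $\hat{G}$ of $\hat{g}$ with $N\mathcal{K}_{H,e/\mathbb{C}}\hat{G}$ free, hence Cohen--Macaulay of dimension~$1$, so $\hat{C}_\ell$ inherits the conjecture. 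Finally, to pin down the $\mathcal{A}_e$-codimension formula $30\ell-18$ I would use the identity~(\ref{nak}) together with the expected linear-in-$\ell$ behaviour of augmentations: the augmentation multiplies the initial codimension by $q(\gamma)=\ell$ and adds a correction, and here $30\ell-18$ should emerge from $\mathcal{A}_e\textnormal{-codim}(\hat{A}_2)=18$ via the standard augmentation formula (of the form $\ell\cdot c - (c-18)$ for the appropriate constant $c=30$), which I would confirm by the explicit weighted-homogeneous Poincaré-series computation on SINGULAR.

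The main obstacle I anticipate is the bookkeeping that identifies $\hat{C}_\ell$ \emph{exactly} as $A_{F,\gamma}(\hat{A}_2)$ for a \emph{stable} $F$: one must choose the unfolding parameters of $\hat{A}_2$ so that a single one of them, turned on as $w^\ell$, reproduces the $w^\ell y$ term while leaving the other coordinates untouched, and then argue that the resulting $\gamma$ avoids $D_V(G)$. Checking that $\operatorname{im}(\gamma)\cap D_V(G)=\{0\}$ and establishing the base case $\ell=1$ of Conjecture~\ref{ngcm} are the genuinely computational steps; everything downstream is a formal application of Theorem~\ref{newex} and Proposition~\ref{propng}.
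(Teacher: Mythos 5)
Your overall strategy matches the paper's: you correctly identify $\hat{C}_\ell$ as $A_{F,\gamma}(\hat{A}_2)$ with $F=F_{\hat{A}_2}$ and $\gamma\colon w\mapsto(0,w^\ell,0)$, deduce $\mathcal{A}$-finiteness from Theorem~\ref{newex} after checking $\operatorname{im}(\gamma)\cap D_V(G)=\{0\}$, and handle the conjecture by verifying Conjecture~\ref{ngcm} directly for $\ell=1$ and invoking Proposition~\ref{propng} for $\ell\geq 2$ via $q(\gamma)=\ell\geq 2$. All of that is exactly what the paper does.

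The gap is in the codimension claim $30\ell-18$. You appeal to a ``standard augmentation formula'' of the form $\ell\cdot c-(c-18)$; no such formula is available in this setting. The known multiplicative formulas for augmentations (Cooper, Houston) require $F$ to be an $\mathcal{A}_e$-versal unfolding of a low-codimension germ, whereas here $F_{\hat{A}_2}$ is merely a $3$-parameter stable unfolding of a germ of codimension $18$, far from versal. Indeed the paper itself only \emph{observes} a posteriori, in a remark, that the codimensions in Table~\ref{tableO} happen to be linear in $\ell$ and says it ``would be interesting to see if this holds in general'' --- so linearity in $\ell$ is not a theorem you may assume. Moreover your proposed expression evaluates to $30\ell-12$, not $30\ell-18$, and a Poincar\'e-series check on SINGULAR for finitely many $\ell$ cannot establish the formula for all $\ell$. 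The paper's actual argument is an explicit, $\ell$-parametrized computation: it produces a Groebner basis for $\hat{g}_1^{*}t\rho\left(\textnormal{Der}(\textnormal{-log }V)\right)$, tracks via Buchberger's algorithm how the basis of $\left(\frac{\partial\gamma_{\hat{C}}}{\partial w}\right)\mathcal{O}+\hat{g}_\ell^{*}t\rho\left(\textnormal{Der}(\textnormal{-log }V)\right)$ depends on $\ell$ (certain generators acquire factors $w^{\ell-1}$ or $w^{\ell}$), and sums the colengths of the three leading-term ideals to get $(10\ell-5)+(11\ell-11)+(9\ell-2)=30\ell-18$. Some computation of this kind, valid uniformly in $\ell$, is needed to close your argument.
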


\begin{proof} The map-germ $\hat{C}_\ell$ is an augmentation of $\hat{A}_2$ (see Table \ref{tablea}) by
\[F_{\hat{A}_2}(x,y,z,\textbf{u})=(x,\ y^2+xz+x^2y+u_1y,\ yz+u_2y,\ z^2+y^3+u_3y,\ \textbf{u})\] and $\gamma_{\hat{C}}\colon w\mapsto (0,w^\ell,0)$.

Let $G$ be the identity on $(\mathbb{C}^4\times \mathbb{C}^3,0)$. Any curve $\gamma\colon w\mapsto (0,\gamma_2(w),0)$, where $\gamma_2(w)$ is not constant, intersects $D_V(G)$ only at the origin. Hence, $\hat{C}_\ell$ is also finitely $\mathcal{A}$-determined by Theorem \ref{newex}. 

Now we will show that $N\mathcal{K}_{V,e}\hat{g}_\ell$ has dimension $30\ell-18$ over $\mathbb{C}$ where $\hat{g}_\ell\colon$ $(X,Y,Z,W,w)\mapsto (X,Y,Z,W,0,w^\ell,0)$ for all $\ell\geq 1$.
Let $\textbf{U}=(U_1,U_2,U_3)$ denote the chosen coordinate system on the parameter space.
Clearly, we can represent any $\xi \in \Theta(\hat{g}_\ell)$ as $\xi=(\xi_1,\xi_2)$ where $\xi_1$ is the $\textbf{Y}$-component and $\xi_2$ the $\textbf{U}$-component. So, let $\rho\colon (\mathbf{Y},\mathbf{U})\mapsto \mathbf{U}$ be the standard projection and $\Theta(\hat{g}_\ell)/\rho:=\left\{ \xi_2 \mid \xi=(\xi_1,\xi_2) \in \Theta(\hat{g}_\ell) \right\}$ which is naturally isomorphic to $(\mathcal{O}_{\mathbb{C}^{5},0})^3$. We have
\begin{eqnarray*}
N\mathcal{K}_{V,e}\hat{g}_\ell\cong
\frac{\Theta(\hat{g}_\ell)/\rho}{\big(\frac{\partial \gamma_{\hat{C}}}{\partial w} \big) \mathcal{O}_{\mathbb{C}^5,0}+\hat{g}_\ell^*t\rho \left(\textnormal{Der}(\textnormal{-log }V)\right)}.
\end{eqnarray*}
Let $\frac{\partial\ }{\partial U_1}, \frac{\partial\ }{\partial U_2}, \frac{\partial\ }{\partial U_3}$ denote the standard basis for $\Theta(\hat{g}_\ell)/\rho$. We find that
a Groebner basis for $M_0:=\hat{g}_1^*t\rho \left(\textnormal{Der}(\textnormal{-log }V)\right)$  with respect to the reverse lexicographic order with priority given to the coefficients is given by the following vector fields.
\begin{eqnarray*}
m_1&:=&X^8 \frac{\partial}{\partial U_1}, \hskip12pt 
m_2:= X^9\frac{\partial}{\partial U_2}, 
\hskip12pt 
m_3:= -\frac{277}{229}X^7\frac{\partial}{\partial U_1} -\frac{25}{458}X^8\frac{\partial}{\partial U_2}+ X^9\frac{\partial}{\partial U_3},
\cr\cr
m_4&:=& \frac{5}{2}X^2\frac{\partial}{\partial U_1}-\frac{1}{4}X^3\frac{\partial}{\partial U_2}+\big (Y-\frac{1}{2}X^4\big )\frac{\partial}{\partial U_3},
\cr \cr
m_5&:=& \big (X^2Y+\frac{74}{39}X^6\big )\frac{\partial}{\partial U_1} -\frac{61}{312}X^7\frac{\partial}{\partial U_2} -\frac{59}{156}X^8 \frac{\partial}{\partial U_3},
\cr \cr
m_6&:=& \big (-\frac{8}{7}XY-\frac{36}{7}X^5\big )\frac{\partial}{\partial U_1}+\big ( X^2Y-\frac{4}{7}X^6\big )\frac{\partial}{\partial U_2}+ \frac{40}{7}X^7 \frac{\partial}{\partial U_3}, \cr \cr
m_7&:=& \big (Y^2+\frac{3439}{32}X^4Y\big )\frac{\partial}{\partial U_1} -\frac{19473}{256}X^5Y \frac{\partial}{\partial U_2} -\frac{82731}{128}X^6Y \frac{\partial}{\partial U_3},
\cr \cr
m_8&:=&\big ( \frac{147}{2}X^3Y-\frac{12939}{16}X^7\big )\frac{\partial}{\partial U_1}+\big (Y^2-\frac{817}{16}X^4Y+\frac{4331}{32}X^8\big )\frac{\partial}{\partial U_2}+ \cr&& -\big (\frac{3483}{8}X^5Y+\frac{1173}{16}X^9\big )\frac{\partial}{\partial U_3}, \cr \cr
m_9&:=& \big (Z+\frac{29}{2}XY-\frac{2549}{16}X^5\big )\frac{\partial}{\partial U_1}+\big (-\frac{159}{16}X^2Y+\frac{861}{32}X^6\big )\frac{\partial}{\partial U_2}+ \cr &&+\big (-\frac{693}{8}X^3Y-\frac{243}{16}X^7\big )\frac{\partial}{\partial U_3}, \cr
\cr
m_{10}&:=&\big (2Y+\frac{29}{4}X^4\big )\frac{\partial}{\partial U_1}+\big (Z+\frac{7}{4}XY-\frac{1}{8}X^5\big )\frac{\partial}{\partial U_2}-\big (\frac{3}{2}X^2Y+\frac{9}{4}X^6\big ) \frac{\partial}{\partial U_3}, \cr \cr
m_{11}&:=& \frac{17}{2}X^3\frac{\partial}{\partial U_1}+\big (\frac{1}{2}Y-\frac{3}{4}X^4\big )\frac{\partial}{\partial U_2}+\big (Z+3XY-\frac{3}{2}X^5\big )\frac{\partial}{\partial U_3}, \cr \cr
m_{12}&:=&\big ( W+\frac{189}{40}XZ+\frac{201}{55}X^2Y-\frac{4853}{80}X^6\big )\frac{\partial}{\partial U_1}+\cr &&+\big (\frac{927}{110}X^7-\frac{15}{176}X^2Z-\frac{177}{44}X^3Y\big )\frac{\partial}{\partial U_2}-\big (\frac{4491}{880}X^3Z+\frac{3087}{110}X^4Y\big )\frac{\partial}{\partial U_3}, \cr \cr
m_{13}&:=&\big (-Z+\frac{1677}{88}XY-\frac{18877}{32}X^5\big )\frac{\partial}{\partial U_1}-\big (\frac{11609}{352}X^2Z+\frac{5919}{22}X^3Y\big )\frac{\partial}{\partial U_3}+\cr &&+\big (W+\frac{305}{88}XZ-\frac{3215}{88}X^2Y+\frac{3623}{44}X^6\big )\frac{\partial}{\partial U_2}, \cr \cr
m_{14}&:=&\big (\frac{7}{33}Y+\frac{55}{24}X^4\big )\frac{\partial}{\partial U_1}+\big (\frac{20}{33}Z-\frac{19}{132}XY-\frac{13}{33}X^5\big )\frac{\partial}{\partial U_2}+\cr &&+\big (W-\frac{47}{88}XZ+\frac{41}{22}X^2Y\big )\frac{\partial}{\partial U_3}, \cr \cr
 m_{15}&:=&\big (w+\frac{57}{4}X^3\big )\frac{\partial}{\partial U_1}+\big (Y-2X^4\big )\frac{\partial}{\partial U_2}+\big (\frac{3}{4}Z+6XY\big )\frac{\partial}{\partial U_3}, \cr \cr
m_{16}&:=& w\frac{\partial}{\partial U_2}, \hskip12pt 
m_{17}:= -\frac{3}{2}X\frac{\partial}{\partial U_1}-\frac{1}{4}X^2\frac{\partial}{\partial U_2}+\big ( w+\frac{3}{2}X^3\big )\frac{\partial}{\partial U_3}. \nonumber
\end{eqnarray*}
Let $\iota\colon \left (X,Y,Z,W,w\right )\mapsto \left (X,Y,Z,W,w^\ell\right )$  so that $\hat{g}_\ell=\iota^*\hat{g}_1$.
By an application of Buchberger's Algorithm, we find that  $m_1,\ldots,m_{14},\iota^*m_{15}$, $\iota^*m_{17}$ together with the following  elements form a Groebner basis for $M:=\big(\frac{\partial \gamma_{\hat{C}}}{\partial w} \big ) \mathcal{O}_{\mathbb{C}^5,0}+\hat{g}_\ell^*t\rho\left( \textnormal{Der}(\textnormal{-log }V )\right )$. 
\begin{eqnarray*}
m_{18}&:=& w^{\ell-1}\frac{\partial}{\partial U_2}, \hskip12pt 
m_{19}:= \big (Yw^{\ell-1}+\frac{11}{2}X^4w^{\ell-1}\big )\frac{\partial}{\partial U_1} -\frac{3}{2}X^6w^\ell\frac{\partial}{\partial U_3}, \cr \cr
m_{20}&:=& \frac{2}{7}X^5w^{\ell-1}\frac{\partial}{\partial U_1}+X^7w^{\ell-1}\frac{\partial}{\partial U_3}, \hskip12pt  
m_{21}:= X^5w^{\ell-1}\frac{\partial}{\partial U_1}.
\end{eqnarray*}

The $\mathbb{C}$-vector space dimension of $(\mathcal{O}_{\mathbb{C}^5,0})^3/M$ equals the number of monomials in the complement of the module $\textnormal{LT}(M)$ which is generated by the leading monomials with respect to the chosen ordering (\cite[\S 2, Chapter 5]{clo-using}). Namely,
\begin{equation}
\textnormal{dim}_{\mathbb{C}} N\mathcal{K}_{V,e}\hat{g}_\ell   =  \textnormal{dim}_{\mathbb{C}}\frac{ (\mathcal{O}_{\mathbb{C}^5,0})^3}{\left(\textnormal{LT}(M)\right)\mathcal{O}_{\mathbb{C}^5,0}}.\end{equation} Therefore,
\begin{eqnarray*}
\textnormal{dim}_{\mathbb{C}} N\mathcal{K}_{V,e}\hat{g}_\ell
& = &  \textnormal{dim}_{\mathbb{C}} \frac{\mathcal{O}_{\mathbb{C}^5,0}}{\left(X^8,X^2Y,Y^2, Z,W,X^5w^{\ell-1},Yw^{\ell-1},w^\ell\right)\mathcal{O}_{\mathbb{C}^5,0}} + \nonumber \\ & & +
 \textnormal{dim}_{\mathbb{C}} \frac{\mathcal{O}_{\mathbb{C}^5,0}}{\left(X^9,X^2Y,Y^2, Z,W,w^{\ell-1}\right)\mathcal{O}_{\mathbb{C}^5,0}} + \\ && +
 \textnormal{dim}_{\mathbb{C}} \frac{\mathcal{O}_{\mathbb{C}^5,0}}{\left(X^9,Y, Z,W,X^7w^{\ell-1},w^\ell\right)\mathcal{O}_{\mathbb{C}^5,0}} \\
&  =& (10\ell-5)+(11\ell-11)+(9\ell-2) \\
&  = &30\ell-18. 
 \end{eqnarray*} 
 
 Finally, the fact that $\hat{C}_\ell$ satisfies the conjecture can be checked using the deformation $\hat{G}_\ell\colon (X,Y,Z,W,w,t)\mapsto (X,Y,Z,W,t,w^\ell,t^2)$ of $\hat{g}_\ell$ (cf. the proof of Proposition \ref{brucemarar1}). \end{proof}

By similar calculations, we can prove the following two propositions.
\begin{prop} The map-germ $f_\ell$ defined by (\ref{ex-bruce}) has $\mathcal{A}_e$-codimension $45\ell-12$.
\end{prop}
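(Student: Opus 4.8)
The plan is to compute $\mathcal{A}_e$-codim$(f_\ell)$ by the same strategy already deployed for the series $\hat{C}_\ell$ in Proposition \ref{CCk}: realise $f_\ell$ as a pullback of the stable unfolding $F$ and reduce the codimension to the $\mathbb{C}$-dimension of a relative normal module, which we then make computable via a Gr\"obner basis. Concretely, by Proposition \ref{brucemarar1} the germ $f_\ell$ is the augmentation $A_{F,\gamma}(\hat{h}_1)$ with
\[F(y,z,\textbf{u})=(y^2+u_1z,\ z^2+u_2y+u_3z,\ y^3+y^2z+yz^2-z^3+u_4y+u_5z,\ \textbf{u})\]
and $\gamma\colon x\mapsto(x^\ell,-x^\ell,0,0,0)$. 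Since $\hat{h}_1$ is $\mathcal{A}$-finite and $\textnormal{im}(\gamma)$ meets $D_V(G)$ only at the origin, $f_\ell$ is $\mathcal{A}$-finite by Theorem \ref{newex}, and the isomorphism $N\mathcal{A}_e f_\ell\cong N\mathcal{K}_{V,e}\hat{g}_\ell$ holds for the associated pullback map $\hat{g}_\ell$. Thus $\mathcal{A}_e$-codim$(f_\ell)=\textnormal{dim}_{\mathbb{C}}N\mathcal{K}_{V,e}\hat{g}_\ell$, and the whole task is to evaluate this dimension as a function of $\ell$.

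First I would set up the quotient presentation of $N\mathcal{K}_{V,e}\hat{g}_\ell$ exactly as in (\ref{tanjantg}), writing it as $(\mathcal{O}_{\mathbb{C}^4,0})^5$ modulo the submodule $M:=\big(\partial\gamma/\partial x\big)\mathcal{O}+\hat{g}_\ell^*t\rho(\textnormal{Der}(\textnormal{-log }V))$, where now $V$ is the image of the $(2,3)$ stable unfolding $F$ and the fibre direction carries five components $\partial/\partial U_1,\dots,\partial/\partial U_5$. Next I would compute on SINGULAR a Gr\"obner basis for the base case $M_0:=\hat{g}_1^*t\rho(\textnormal{Der}(\textnormal{-log }V))$ with respect to the reverse lexicographic order with priority on coefficients, exactly the ordering used in Proposition \ref{CCk}. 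Writing $\iota\colon(X,Y,Z,x)\mapsto(X,Y,Z,x^\ell)$ so that $\hat{g}_\ell=\iota^*\hat{g}_1$, I would then run Buchberger's algorithm to lift the base-case generating set to a Gr\"obner basis of $M$ for general $\ell$, tracking how the extra generator $\partial\gamma/\partial x=\ell x^{\ell-1}(\partial/\partial U_1-\partial/\partial U_2)$ interacts with the $\iota^*m_i$. As in Proposition \ref{CCk}, I expect the $\ell$-dependence to be captured by a finite collection of new leading monomials carrying factors $x^{\ell-1}$ and $x^\ell$.

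With the leading-monomial ideal $\textnormal{LT}(M)$ in hand, the dimension count reduces by the standard argument to $\textnormal{dim}_{\mathbb{C}}\mathcal{O}_{\mathbb{C}^4,0}^5/(\textnormal{LT}(M))$, which I would split into a sum of monomial-quotient dimensions, one per basis direction $\partial/\partial U_j$, each an explicit staircase count linear in $\ell$. Summing these staircases should yield the closed form $45\ell-12$; I would verify the formula against the direct SINGULAR output for the small values $\ell=1,2,3$ to catch any off-by-one errors in the staircase bookkeeping. The main obstacle is exactly this $\ell$-uniformity step: one must confirm that the Gr\"obner basis computed at a fixed small $\ell$ genuinely specialises to all $\ell$ after applying $\iota^*$, i.e.\ that no new S-polynomial reductions appear for large $\ell$ and that the leading terms stabilise. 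This is the same subtlety handled in Proposition \ref{CCk}, and since $f_\ell$ is an augmentation of the same order of complexity, I expect the argument to go through verbatim; the finite-determinacy and Conjecture \ref{conjmond} claims are already secured by Proposition \ref{brucemarar1}, so only the numerical codimension remains to be pinned down here.
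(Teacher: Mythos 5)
Your proposal follows essentially the same route as the paper, which proves this proposition only by the remark ``by similar calculations'' referring back to the Gr\"obner-basis method of Proposition \ref{CCk}: realise $f_\ell$ as the pullback $\hat{g}_\ell=\iota^*\hat{g}_1$ of the stable unfolding, compute a standard basis of $\hat{g}_1^*t\rho(\textnormal{Der}(\textnormal{-log }V))$ in the base case, lift it together with $\partial\gamma/\partial x$ to general $\ell$, and count leading monomials to obtain a formula linear in $\ell$. Your setup (the module $(\mathcal{O}_{\mathbb{C}^4,0})^5$, the substitution $\iota$, and the staircase count) matches the paper's intended argument, so no further comparison is needed.
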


\begin{prop} The map-germs shown in Table \ref{tableO} are finitely $\mathcal{A}$-determined and satisfy the conjecture for all $\ell\geq 1$. \end{prop}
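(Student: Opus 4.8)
The plan is to treat each family in Table~\ref{tableO} in exactly the same way that $\hat{C}_\ell$ was handled in Proposition~\ref{CCk}, since every map-germ in the table is presumably an augmentation of one of the corank~2 examples already established as $\mathcal{A}$-finite (the entries $\hat{A}_\bullet$ and $\hat{B}_\bullet$ of Table~\ref{tablea}). First I would, for each row, identify the initial map-germ $f$, a parametrised stable unfolding $F$ of it, and the curve $\gamma\colon(\mathbb{C},0)\to(\mathbb{C}^d,0)$ realising the given map-germ as $A_{F,\gamma}(f)$. Once this data is exhibited, finite $\mathcal{A}$-determinacy is immediate from Theorem~\ref{newex}: one only needs to verify that $\operatorname{im}(\gamma)$ meets the $\mathcal{K}_V$-discriminant $D_V(G)$ of the identity $G$ on $(\mathbb{C}^p\times\mathbb{C}^d,0)$ only at the origin. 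As in Proposition~\ref{CCk}, the curves occurring are monomial of the form $w\mapsto(0,\dots,0,w^\ell,0,\dots,0)$, for which the transversality of the ambient inclusion $g\colon \mathbf{Y}\mapsto(\mathbf{Y},0)$ to $F$ forces the intersection to be $\{0\}$ whenever $\gamma$ is non-constant.

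Next I would verify Conjecture~\ref{conjmond} for each family. The mechanism is the two-tier argument already used for $\hat{C}_\ell$: the base case $\ell=1$ is checked directly, and the inductive step is supplied by Proposition~\ref{propng}. Concretely, for $\ell=1$ I would compute on SINGULAR that $N\mathcal{K}_{H,e/\mathbb{C}}G$ is a Cohen--Macaulay $\mathcal{O}_{\mathbb{C},0}$-module of dimension~$1$, i.e.\ that the $\ell=1$ member satisfies Conjecture~\ref{ngcm}. Because the initial map-germs come from Table~\ref{tablea} and are already known to satisfy the conjecture, and because the augmenting curves $w\mapsto w^\ell$ have multiplicity $q(\gamma)=\ell\geq 2$ for $\ell\geq 2$, Proposition~\ref{propng} then produces a deformation $\hat{G}$ of $\hat{g}$ with $N\mathcal{K}_{H,e/\mathbb{C}}\hat{G}$ Cohen--Macaulay of dimension~$1$, establishing the conjecture for all $\ell\geq 2$ at once. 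Since $p=n+1$ here, by the discussion following Conjecture~\ref{ngcm} this yields Conjecture~\ref{conjmond} for the whole series.

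The remaining content is the explicit determination of $\mathcal{A}_e\textnormal{-codim}$ as a function of $\ell$, which records the numerical evidence for the inequality~(\ref{mui}). For this I would mirror the Groebner-basis computation of Proposition~\ref{CCk}: compute a Groebner basis for $M_0=\hat{g}_1^*t\rho(\textnormal{Der}(\textnormal{-log}\,V))$ with respect to the reverse lexicographic order with priority on the coefficients, then pull back along $\iota\colon(X,\dots,w)\mapsto(X,\dots,w^\ell)$ and run Buchberger's algorithm to obtain a Groebner basis for $M=(\partial\gamma/\partial w)\,\mathcal{O}+\hat{g}_\ell^*t\rho(\textnormal{Der}(\textnormal{-log}\,V))$. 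The codimension is then $\dim_{\mathbb{C}}\mathcal{O}^{d}/(\textnormal{LT}(M))$, and splitting the quotient of leading terms into a sum of monomial-ideal colengths (each an affine-linear function of $\ell$) yields a closed formula in $\ell$, exactly as the computation $(10\ell-5)+(11\ell-11)+(9\ell-2)=30\ell-18$ did for $\hat{C}_\ell$.

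The main obstacle I anticipate is not conceptual but computational: the initial Groebner basis $M_0$ for each family must be computed explicitly (and, as the author notes, memory restrictions already limited the range of $k$ and $\ell$ for the Table~\ref{tablea} examples), and one must then check that pulling back along $\iota=w\mapsto w^\ell$ does not disturb the leading-term structure in a way that breaks the uniform-in-$\ell$ shape of the answer --- that is, that the leading monomials of the Groebner basis stay stable across all $\ell\geq 1$ so that the colengths are genuinely affine-linear in $\ell$. Confirming this stability, rather than verifying finitely many cases and guessing, is the delicate step; once the pattern of leading terms is pinned down the codimension formula and the Cohen--Macaulay verification for $\ell=1$ are routine, and the $\ell\geq 2$ cases follow formally from Proposition~\ref{propng}.
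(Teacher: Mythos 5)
Your proposal matches the paper's approach essentially verbatim: the paper itself disposes of this proposition with the single line ``by similar calculations'' referring back to the treatment of $\hat{C}_\ell$ in Proposition~\ref{CCk}, i.e.\ exhibit the initial germ, stable unfolding and curve $\gamma$ (as recorded in Table~\ref{tableunf}), invoke Theorem~\ref{newex} for finite determinacy, verify Conjecture~\ref{ngcm} for $\ell=1$ on SINGULAR, apply Proposition~\ref{propng} for $\ell\geq 2$, and extract the codimension formula from a Groebner basis of $\hat{g}_\ell^*t\rho(\textnormal{Der}(\textnormal{-log }V))$ plus $(\partial\gamma/\partial w)$. The only caveats are minor: for the $(5,6)$ entries the initial germs are the already-established $(4,5)$ augmentations $\hat{C}_k$, $\hat{E}_1$, $\hat{K}_1$ rather than germs from Table~\ref{tablea}, and the fact that the coordinate-axis curves meet $D_V(G)$ only at the origin is a case-by-case check on the discriminant rather than a formal consequence of transversality of $g$.
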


\begin{table}[!h]
\caption{$\mathcal{A}$-finite map-germs in $\mathcal{E}_{4,5}^0$ and $\mathcal{E}_{5,6}^0$.}
\label{tableO}
\begin{tabular}{@{}cccc@{}}
Label & Augmentation & $\mathcal{A}_e\textnormal{-codim}$ & Unf. \\\hline
$\hat{D}_\ell$ & $(x, y^2+xz+x^2y+w^\ell y,  yz, z^2+y^3+w^{2\ell}y, w)$  & $45\ell$-$18$ & $F_{\hat{A}_2}$ \\ 
$\hat{E}_\ell$ & $(x, y^3+xz+x^4y+w^\ell y, yz, z^2+y^5+w^{2\ell}y, w)$ & $536\ell$-$186$ & $F_{\hat{A}_3}$  
\\
$\hat{J}_\ell$ & $(x, y^3+xz+x^4y+w^{2\ell} y^2, yz+w^{5\ell}y, z^2+y^5, w)$ & $2144\ell$-$186$ & $F_{\hat{A}_3}$  
\\
$\hat{K}_\ell$ & $(x, y^2+xz, z^2+xy, y^3+yz^2+z^3+w^\ell z, w)$ & $51\ell$-$33$& $F_{\hat{B}_3}$ 
\\
$\hat{L}_\ell$ & $(x, y^2+xz, z^2+xy, y^5+y^3z^2+z^5+w^\ell y-w^\ell z, w)$ & $372\ell$-$252$& $F_{\hat{B}_5}$ \\
\hline
$\hat{M}_{1,\ell}$ & $(x, y^2+xz+x^2y+v^\ell y,  yz+wy, z^2+y^3+v^{2\ell}y, w, v)$  & $25\ell$-$12$ & $F_{\hat{C}_1}$\\
$\hat{M}_{2,\ell}$& $(x, y^2+xz+x^2y+v^\ell y,  yz+w^2y, z^2+y^3+v^{2\ell}y, w, v)$  & $95\ell$-$42$ & $F_{\hat{C}_2}$ \\
$\hat{M}_{3,\ell}$& $(x, y^2+xz+x^2y+v^\ell y,  yz+w^3y, z^2+y^3+v^{2\ell}y, w, v)$  & $165\ell$-$72$ & $F_{\hat{C}_3}$ \\
$\hat{N}_{\ell}$ & $(x, y^3+xz+x^2y+wy, yz+v^{\ell}z, z^2+y^5+w^2y+v^{4\ell}y+v^{3\ell}y^2, w, v)$ & $1750\ell$-$350$ & $F_{\hat{E}_1}$\\
$\hat{P}_\ell$ & $(x, y^2+xz+v^\ell z, z^2+xy, y^3+yz^2+z^3+wz+v^{2\ell}y, w, v)$ & $42\ell$-$18$ & $F_{\hat{K}_1}$\\ \hline 
\end{tabular}
\end{table}

See Table \ref{tableunf} for the list of unfoldings from which the series in Table \ref{tableO} are deduced.

\begin{table}[!h]
\caption{Stable unfoldings.}
\label{tableunf}
\begin{tabular}{@{}cc@{}}
Label & Unfolding
 \\  \hline
$F_{\hat{A}_2}$ & $(x,\ y^2+xz+x^2y+u_{11}y,\ yz+u_{12}y,\ z^2+y^3+u_{13}y,\textbf{u}_1)$
\\
$F_{\hat{A}_3}$ & $(x,\ y^3+xz+x^4y+u_{21}y+u_{22}y^2,\ yz+u_{23}y,\ z^2+y^5+u_{24}y+u_{25}y^2,\textbf{u}_2)$
\\
$F_{\hat{B}_3}$ & $(x,\ y^2+xz+u_{31}z,\ z^2+xy+u_{32}z,\ y^3+yz^2+z^3+u_{33}y+u_{34}z, \textbf{u}_3)$
\\
$F_{\hat{B}_5}$ & $(x,\ y^2+xz+u_{41}z,\ z^2+xy,\ y^3+yz^2+z^3+u_{42}y+u_{43}z+u_{44}yz, \textbf{u}_4)$
\\
$F_{\hat{C}_\ell}$ & $(x,\ y^2+xz+x^2y+u_{51}y,\ yz+w^\ell y+u_{52}y,\ z^2+y^3+u_{53}y,w,\textbf{u}_5)$
\\
$F_{\hat{E}_1}$ & $(x,\ y^3+xz+x^4y+wy,\ yz+u_{61}y+u_{62}z,\ z^2+y^5+u_{63}y+u_{64}y^2,w,\textbf{u}_6)$
\\
$F_{\hat{K}_1}$ & $(x,\ y^2+xz+u_{71}z,\ z^2+xy+u_{72}z,\ y^3+yz^2+z^3+wz+u_{73}y,\ w,\textbf{u}_7)$
\\\hline 
\end{tabular}\end{table}

\begin{rem} The smallest codimension we have got so far is $\mathcal{A}_e\textnormal{-codim}(\hat{C}_1)=12$ in the dimensions $(4,5)$ and $\mathcal{A}_e\textnormal{-codim}(\hat{M}_{1,1})=13$ in $(5,6)$.
\end{rem}

\begin{rem} The map-germs $\hat{M}_{1,\ell}$ and $\hat{M}_{2,\ell}$ are actually parts of the series \begin{equation} \hat{M}_{k,\ell}\colon (x,y,z,w,v)\mapsto (x,\ y^2+xz+x^2y+v^k y,\  yz+w^\ell y,\ z^2+y^3+v^{2k}y,w,v)\end{equation}
which are finitely $\mathcal{A}$-determined with $\mathcal{A}_e\textnormal{-codim}=(70k-30)\ell-45k+18$.\end{rem}

\begin{rem} The formula for $\mathcal{A}_e$-codimension for each germ in Table \ref{tableO} is a linear form in one variable and the constant term is equal to the codimension of the initial map-germ. It would be interesting to see if this holds in general.
\end{rem}

\begin{rem}\label{remHK} All map-germs from Houston and Kirk's list of simple singularities of corank 1 (\cite[Table 1]{houston-kirk}) can be constructed from finitely $\mathcal{A}$-determined map-germs in $\mathcal{E}_{2,3}^0$ by our method. We list them in Table \ref{tableHK} together with their initial map-germs. The labels for the initial map-germs are from Mond's list in \cite{mondclass}.
\end{rem}

\begin{table}[!h]
\caption{Houston and Kirk's list of simple map-germs in $\mathcal{E}_{3,4}^0$.}
\label{tableHK}
\begin{tabular}{@{}cccc@{}} 
Label & Map-germ & Initial map & Mond's ref. \\ \hline
$A_k$& $(x,y,z^2, z^3\pm x^2z\pm y^{k+1}z)$  & $(x,z^2,z^3\pm x^2z)$ & $S_1$ \\
$D_k$& $(x,y,z^2, z^3+x^2yz\pm y^{k-1}z)$  & $(y,z^2,z^3\pm y^{k-1}z)$ & $S_{k-2}$ \\ \hline
\multirow{3}{*}{$E_6$} & \multirow{3}{*}{$(x,y,z^2, z^3+x^3z\pm y^{4}z)$}  &  $(x,z^2,z^3+x^3z)$ & $S_2$ \\
& & or & \\
& & $(y,z^2,z^3\pm y^4z)$ & $S_3$ \\ \hline
$E_7$& $(x,y,z^2, z^3+x^3z+xy^{3}z)$
& $(x,z^2,z^3+x^3z)$ & $S_2$ \\ \hline
\multirow{3}{*}{$E_8$} & \multirow{3}{*}{$(x,y,z^2, z^3+x^3z\pm y^{5}z)$}  &  $(x,z^2,z^3+x^3z)$ & $S_2$ \\
& & or & \\
& & $(y,z^2,z^3\pm y^5z)$ & $S_4$ \\\hline
$B_k$& $(x,y,z^2, x^2z\pm y^2z\pm z^{2k+1})$  & $(x,z^2,x^2z\pm z^{2k+1})$ & $B^\pm_{k}$ \\
$C_k$& $(x,y,z^2, x^2z+yz^3\pm y^{k}z)$  & $(y,z^2,yz^3\pm y^kz)$ & $C^\pm_{k}$ \\\hline
\multirow{3}{*}{$F_4$}& \multirow{3}{*}{$(x,y,z^2, x^2z+y^3z\pm z^5)$}  & $(x,z^2,x^2z\pm z^5)$ & $B^\pm_{2}$ \\
& &  or & \\
& & $(y,z^2,y^3z\pm z^5)$ & $F_4$ \\\hline
$P_1$ & $(x,y,yz+z^4,xz+z^3)$  &  $(x,z^4,xz+z^3)$ & $T_4$ \\
$P_2$ & $(x,y,yz+z^5,xz+z^3)$  &  $(y,yz+z^5,z^3)$ & $H_2$ \\
$P_3^k$ & $(x,y,yz+z^6\pm z^{3k+2},xz+z^3)$  &  $(y,yz+z^6\pm z^{3k+2},z^3)$ & $H_{k+1}$ \\
$P_4^1$ & $(x,y,yz+z^7+z^8,xz+z^3)$  &  $(y,yz+z^7+z^{8},z^3)$ & $H_{3}$ \\
$P_4$ & $(x,y,yz+z^7,xz+z^3)$  &  $(x,z^7,xz+z^3)$ & not in the list \\
$Q_k$ & $(x,y,xz+z^4\pm y^kz^2+yz^2,z^3\pm y^kz)$  & $(y,z^4\pm y^kz^2+yz^2,z^3\pm y^kz)$
& not in the list \\
$R_k$ & $(x,y,xz+z^3,yz^2+z^4+z^{2k-1})$  &  $(x,xz+z^3,z^4+z^{2k-1})$ & $T_4$ \\
$S_{j,k}$ & $(x,y,xz+y^2z^2\pm z^{3j+2},z^3\pm y^{k}z)$  &  $(x,xz+z^{3j+2},z^3)$ & $H_{j+1}$ \\\hline 
\end{tabular} \end{table}

\section*{Acknowledgments}
This work is based on a part of the author's PhD thesis submitted at the University of Warwick in 2011 under the supervision of David Mond to whom she would like to express her deepest gratitude. Without his encouragement and guidance, this research would not have been commenced or improved.  The author would also like to thank Meral Tosun for helpful suggestions in the preparation of this manuscript and the referee for careful reading and constructive comments.

\bibliographystyle{amsplain}
\bibliography{a-reference}

\end{document}